\documentclass[11pt,a4paper]{article}
\usepackage[latin1]{inputenc}
\usepackage{amsmath}
\usepackage{amsthm}
\usepackage{amsfonts}
\usepackage{amsfonts,amsthm,latexsym,amsmath,amssymb,amscd,epsfig,psfrag,enumerate}
\usepackage{graphics,graphicx, bezier, float, color, hyperref}
\usepackage{amssymb,url}
\usepackage{multienum}
\usepackage[table]{xcolor}
\usepackage{multicol,multirow}
\usepackage{graphicx}
\usepackage{fancyvrb}
\usepackage{parskip}
\usepackage{listings}
\usepackage[toc,page]{appendix}
\sloppy
\setlength{\parindent}{0pt}
\setlength\parskip{0.1in}
\usepackage[top=2.7cm, bottom=2.7cm, left=1.5cm, right=1.5cm]{geometry}
\usepackage{xcolor}

\usepackage{blkarray}
\newtheorem{theorem}{Theorem}[section]
\newtheorem{lemma}{Lemma}[section]

\usepackage[none]{hyphenat}[section]
\newtheorem{definition}{Definition}[section]

\numberwithin{equation}{section}
\numberwithin{table}{section}
\numberwithin{figure}{section}
                              
\title{On Palindromic forms in the $k$-Lucas sequence composed of two distinct Repdigits}
\author{Herbert Batte$^{1} $ and Prosper Kaggwa$^{2,*}$}
\date{}                       
                              
\begin{document}              
\maketitle                   
\abstract{ 
For integers $k \geq 2$, the $k$-generalized Lucas sequence $\{L_n^{(k)}\}_{n \geq 2-k}$ is defined by the recurrence relation  
\[                            
L_n^{(k)} = L_{n-1}^{(k)} + \cdots + L_{n-k}^{(k)} \quad \text{for } n \geq 2, 
\]                            
with initial terms given by $L_0^{(k)} = 2$, $L_1^{(k)} = 1$, and $L_{2-k}^{(k)} = \cdots = L_{-1}^{(k)} = 0$.  
In this paper, we extend work in \cite{Lucas} and show that the result in \cite{Lucas} still holds for $k\ge 3$, that is, we show that for $k\ge 3$, there is no $k$-generalized Lucas number appearing as a palindrome formed by concatenating two distinct repdigits.
}            
	                             
	{\bf Keywords and phrases}: $k$-generalized Lucas numbers; Palindromes; Repdigits; Linear forms in logarithms; LLL-algorithm.
	                             
	{\bf 2020 Mathematics Subject Classification}: 11B39, 11D61, 11D45, 11Y50.
	                             
	\thanks{$ ^{*} $ Corresponding author}
	                             
\section{Introduction}        
\subsection{Background}       
                              
For integers \( k \geq 2 \), the \( k \)-generalized Lucas sequence \( \{L_n^{(k)}\}_{n \in \mathbb{Z}} \), is defined by the recurrence relation \[ L_n^{(k)} = L_{n-1}^{(k)} + \cdots + L_{n-k}^{(k)} \] for \( k \geq 2 \), with initial conditions \( L_0^{(k)} = 2 \), \( L_1^{(k)} = 1 \), and \( L_{2-k}^{(k)} = \cdots = L_{-1}^{(k)} = 0 \). It is a generalization of the well-known sequence of Lucas numbers, which is obtained in the case \( k = 2 \) and has been the subject of extensive research in Number Theory.
                              
A \textit{repdigit} in base 10 refers to a number consisting solely of repeated instances of the same digit. It can be expressed as
\[                            
N = \underbrace{\overline{d \ldots d}}_{\ell \text{ times}} = d \left( \frac{10^\ell - 1}{9} \right),
\]
where \( 0 \leq d \leq 9 \) and \( \ell \geq 1 \). 

In recent years, there has been growing interest in understanding the Diophantine properties of recurrence sequences, particularly in relation to their connection with repdigits. Several studies have explored whether certain elements of these sequences can be expressed as either sums or concatenations involving specific types of numbers. Early contributions by Banks and Luca in \cite{banks} offered foundational results on this topic, albeit within a limited framework. A more targeted investigation by \cite{Repdigit2} focused on Lucas numbers which are concatenations of
two repdigits, establishing that the only such instances are 
$$L_6 =18,\quad L_7 = 29, \quad L_{8} =47,\quad L_9=96,\quad L_{11}=199 \quad \text{and} \quad L_{12} = 322. $$

Recently, researched have turned attention on exploring palindromic numbers constructed from repdigits within the framework of linear recurrence sequences. A \textit{palindrome} is an integer that remains the same when read in reverse, for example, 12345678987654321, 909, and 6112116. Interest in identifying palindromes within such sequences has grown considerably, see for example \cite{Padovan}, \cite{Narayan}, \cite{Batte}, \cite{kaggwa}, \cite{guma} and \cite{Lucas}. In particular, the author in \cite{Lucas} proved that there is no Lucas number that can be written as a palindromic concatenation of two distinct repdigits, solving the Diophantine equation \eqref{eq:main} when $k=2$.

Motivated by the increasing interest in this area, we now focus on the \( k \)-Lucas sequence. Specifically, we examine solutions to the Diophantine equation
\begin{align}\label{eq:main}
	L_n^{(k)} = \overline{\underbrace{d_1 \ldots d_1}_{\ell \text{ times}} \underbrace{d_2 \ldots d_2}_{m \text{ times}} \underbrace{d_1 \ldots d_1}_{\ell \text{ times}}},\qquad\text{with}\qquad k\geq3,
\end{align}
where \( d_1, d_2 \in \{0,1,\dots,9\} \), \( d_1 > 0 \), \( d_1 \ne d_2 \), and \( \ell, m \geq 1 \). In other words, we seek those \( k \)-Lucas numbers that take the form of palindromes created by symmetrically concatenating two distinct repdigits. As this structure requires at least three digits, we restrict our attention to \( n \geq 8 \) throughout the paper. Our main result is the following.

\subsection{Main Results}\label{sec:1.2l}
\begin{theorem}\label{thm1.1l} 
	For $k\geq3$, There is no  $k$-Lucas number that is a palindromic concatenation of two
	distinct repdigits.
	\end{theorem}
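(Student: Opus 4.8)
The plan is to follow the now-standard Baker-type approach to such Diophantine equations, combining linear forms in logarithms (Matveev's theorem) with a reduction step via the LLL-algorithm.

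\textbf{Setting up the equation.} First I would rewrite \eqref{eq:main} in closed form. Writing $L = \ell + m + \ell$ for the total number of digits of the palindrome, the right-hand side can be expressed as
\[
L_n^{(k)} = d_1\cdot\frac{10^{2\ell+m}-1}{9} - (d_1-d_2)\cdot\frac{10^{\ell+m}-1}{9}\cdot 10^{\ell} + \text{lower order terms},
\]
or more precisely as a sum of three repdigit blocks scaled by appropriate powers of $10$. The dominant term is $\tfrac{d_1}{9}10^{2\ell+m}$. I would next recall the known Binet-type analytic estimates for $L_n^{(k)}$: there is a dominant root $\alpha = \alpha(k)$ of the characteristic polynomial $x^k - x^{k-1} - \cdots - 1$ with $2(1-1/k)<\alpha<2$, and one has $L_n^{(k)} = 2\,\frac{\alpha-1}{(k+1)\alpha - 2k}\,\alpha^{n} + E$ with $|E| < 3/2$ (the ``tail'' contribution of the other roots), valid for $n$ in the relevant range; equivalently $\alpha^{n-1} \le L_n^{(k)} \le 2\alpha^{n}$. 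These bounds let me relate $n$ and $L$: roughly $n \log\alpha \approx (2\ell+m)\log 10$, so $n$ and $L$ are each bounded by a constant times the other.

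\textbf{First linear form in logarithms.} I would isolate the dominant terms on each side to obtain an inequality of the shape $\bigl| c(k)\,\alpha^{n} - \tfrac{d_1}{9}10^{2\ell+m}\bigr| < (\text{something like})\ 10^{\ell+m}$, divide through by the larger term, and arrive at
\[
\left| c(k)\,\alpha^{n}\,10^{-(2\ell+m)}\,\left(\tfrac{d_1}{9}\right)^{-1} - 1\right| < \frac{C}{10^{\ell}}.
\]
Applying Matveev's theorem to the left-hand side (with three or four algebraic numbers: $\alpha$, $10$, and the rational $d_1/9$, noting $c(k)$ is algebraic of degree $\le k$ over $\mathbb{Q}$) yields a lower bound of the form $\exp(-C_1 (\log k)\cdot (\text{polylog of }n)\cdot \log n)$; comparing with the upper bound $C\cdot 10^{-\ell}$ gives $\ell \log 10 < C_2 (\log k)(\log n)^2$ or similar. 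Then a second application — this time comparing $c(k)\alpha^n$ with $\tfrac{d_1 - (d_1-d_2)}{9}10^{2\ell+m}$-type expression (peeling off the next block) — bounds $\ell + m$ in terms of $\log n$. Iterating once more bounds $2\ell+m$, hence $L$, and therefore $n$, by an absolute (but $k$-dependent, via $\log k$) constant, say $n < C_3 (\log k)^4$ or so.

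\textbf{Handling the $k$-dependence and the reduction.} The subtlety specific to the $k$-generalized setting — and the step I expect to be the main obstacle — is getting a bound on $n$ that is genuinely \emph{uniform in $k$}, or at least usable. The standard device (Bravo–Luca style) is: for $n$ not too large relative to $k$ (concretely $n < 2^{k/2}$ or $n \le k+1$), the sequence $L_n^{(k)}$ coincides with values one can control directly because $L_n^{(k)} = 3\cdot 2^{n-2} - (\text{something tiny})$ in that range, reducing \eqref{eq:main} to an equation about numbers near powers of $2$, which can be dispatched by an elementary congruence/size argument. For the complementary range $n \ge 2^{k/2}$, one has $\log k \ll \log n$, so the bound $n < C_3(\log k)^4$ collapses to an absolute bound on $n$ (and hence on $k$, since $2^{k/2}\le n$). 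That leaves finitely many pairs $(k,n)$ — actually a finite range $k \le K_0$, $n \le N_0$ with explicit $K_0, N_0$ — but these bounds are astronomically large, so the final step is to reduce them: fix $d_1, d_2$ and the residues of $\ell, m$ modulo small numbers, treat the linear form $n\log\alpha - (2\ell+m)\log 10 + \log(c(k)\cdot 9/d_1)$, and apply the LLL-algorithm (or the Baker–Davenport lemma on continued fractions) to shrink $n$ to a manageable size, after which a direct computer search over the remaining small $(k,n,\ell,m,d_1,d_2)$ confirms no solution with $n \ge 8$ exists. I would also separately verify the small cases $k \in \{3,4,5,\dots\}$ up to where the asymptotic estimates kick in by direct computation.
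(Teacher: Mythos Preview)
Your overall architecture matches the paper's --- rewrite the palindrome, use the Binet-like formula, apply Matveev twice to bound $\ell$ and then $m$, and finish with LLL reduction and a search --- but your handling of the $k$-dependence has a gap. First a minor correction: the Matveev bound is not of the shape $n < C_3(\log k)^4$. Since $\mathbb{K}=\mathbb{Q}(\alpha)$ has degree $D=k$ and each $A_i\ge D\,h(\gamma_i)$ picks up a factor of $k$, the bound is polynomial in $k$; the paper obtains $n < 1.63\cdot 10^{29}\,k^8(\log k)^5$.

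The real problem is your claim that the regime $n<2^{k/2}$ ``can be dispatched by an elementary congruence/size argument.'' That is true only when $n\le k$, where $L_n^{(k)}=3\cdot 2^{n-2}$ holds \emph{exactly} and congruences modulo $10^\ell$ and $2^{14}$ force $\ell\le 3$, $m\le 12$ (exactly as the paper does). For $k+1\le n<2^{k/2}$ you only have the approximation $|L_n^{(k)}-3\cdot 2^{n-2}|<3\cdot 2^{n-2}\cdot 36/2^{k/2}$, an error that is relatively small but absolutely enormous, so congruence arguments no longer apply and both $k$ and $n$ remain unbounded in your scheme. The paper closes this case by applying Matveev \emph{again}, now to linear forms such as $(2\ell+m)\log 10-(n-2)\log 2+\log(d_1/27)$ over $\mathbb{Q}$ (degree $D=1$, hence $k$-free constants), which yields $k<C(\log n)^2$; combined with $n<Ck^8(\log k)^5$ this bounds $k$ and $n$ absolutely, and a further LLL reduction drives $k$ below $1500$, contradicting the working hypothesis $k>1500$. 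Without this second Baker step over $\mathbb{Q}$, your dichotomy leaves the branch ``$k$ large, $k+1\le n<2^{k/2}$'' wide open.
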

\section{Some properties of $k$-generalized Lucas numbers.}
It is well established that  
\begin{align}\label{eq2.2}
	L_n^{(k)} = 3 \cdot 2^{n-2},\qquad \text{for all}\qquad 2 \leq n \leq k.
\end{align}
They generate a linearly recurrent sequence with the characteristic polynomial  
\[
\Psi_k(x) = x^k - x^{k-1} - \cdots - x - 1,
\]
which is irreducible over $\mathbb{Q}[x]$. The polynomial $\Psi_k(x)$ has a unique real root $\alpha(k) > 1$, with all other roots lying outside the unit circle, as noted in \cite{miles}. The root $\alpha(k) := \alpha$ satisfies the bound  
\begin{align}\label{3.1}
	2(1 - 2^{-k}) < \alpha < 2 \quad \text{for all} \quad k \geq 2,
\end{align}
as established in \cite{wolfram}. Similar to the classical case where $k=2$, it was demonstrated in \cite{bravo2014} that  
\begin{align}\label{2.2}
	\alpha^{n-1} \leq L_n^{(k)} \leq 2\alpha^n, \quad \text{for all } n \geq 1, k \geq 2.
\end{align}

For any $k \geq 2$, define the function  
\begin{equation}\label{fk}
	f_k(x) := \dfrac{x - 1}{2 + (k+1)(x - 2)}.
\end{equation}
From the bound in \eqref{3.1}, it follows that  
\begin{equation}\label{fkprop}
	\frac{1}{2} = f_k(2) < f_k(\alpha) < f_k(2(1 - 2^k)) \leq \frac{3}{4},
\end{equation}
for all $k \geq 3$. It is straightforward to check that this inequality is also valid for $k = 2$. Moreover, one can verify that for all $2 \leq i \leq k$, where $\alpha_i$ represents the remaining roots of $\Psi_k(x)$, the condition $|f_k(\alpha_i)| < 1$ holds.

The following lemma plays a crucial role in applying Baker's theory.  
\begin{lemma}[Lemma 2, \cite{gomez}]
	For all $k \geq 2$, the number $f_k(\alpha)$ is not an algebraic integer.
\end{lemma}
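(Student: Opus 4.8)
The plan is to argue by contradiction via the field norm of $\gamma := f_k(\alpha)$ down to $\mathbb{Q}$. First I would record that, since $\Psi_k$ is irreducible, the number field $\mathbb{K} := \mathbb{Q}(\alpha)$ has degree $k$, and its $k$ embeddings into $\mathbb{C}$ are exactly the maps determined by $\alpha \mapsto \alpha_i$, where $\alpha = \alpha_1, \dots, \alpha_k$ are the roots of $\Psi_k$. Because $f_k$ in \eqref{fk} is a rational function with rational coefficients whose only pole is the rational number $\tfrac{2k}{k+1}$, and $\Psi_k$ (being irreducible of degree $k \geq 2$) has no rational root, the denominator $2+(k+1)(x-2)$ of $f_k$ does not vanish at any $\alpha_i$; hence each embedding sends $\gamma$ to $f_k(\alpha_i)$, so that
\[
N_{\mathbb{K}/\mathbb{Q}}(\gamma) = \prod_{i=1}^{k} f_k(\alpha_i).
\]

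Next, assuming for contradiction that $\gamma$ is an algebraic integer, this product lies in $\mathbb{Z}$. I would then bound it using the facts already established in the text: by \eqref{fkprop} we have $\tfrac12 < f_k(\alpha) < \tfrac34$, while $|f_k(\alpha_i)| < 1$ for all $2 \leq i \leq k$. Hence
\[
\bigl| N_{\mathbb{K}/\mathbb{Q}}(\gamma) \bigr| = |f_k(\alpha)| \prod_{i=2}^{k} |f_k(\alpha_i)| < \frac{3}{4} < 1 .
\]
Since a rational integer of absolute value less than $1$ must be $0$, this forces $N_{\mathbb{K}/\mathbb{Q}}(\gamma) = 0$, so $f_k(\alpha_i) = 0$ for some $i$, i.e. $\alpha_i = 1$. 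But $\Psi_k(1) = 1 - k \neq 0$ for $k \geq 2$, so $1$ is not a root of $\Psi_k$ — a contradiction. Therefore $f_k(\alpha)$ is not an algebraic integer.

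I do not expect a genuine obstacle here. The one point requiring a little care is the justification that the conjugates of $\gamma$ over $\mathbb{Q}$ are precisely the numbers $f_k(\alpha_i)$ (equivalently, that $N_{\mathbb{K}/\mathbb{Q}}(\gamma)$ is the above product even if $\gamma$ generates a proper subfield of $\mathbb{K}$), which rests only on $\Psi_k$ having no rational root so that $f_k$ has no pole among the $\alpha_i$. Everything else is the standard principle that a nonzero algebraic integer cannot have norm of absolute value below $1$, combined with the two-sided estimate on $f_k(\alpha)$ and the bound $|f_k(\alpha_i)| < 1$ that are supplied in the preceding paragraph.
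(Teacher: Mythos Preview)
Your argument is correct: the norm $N_{\mathbb{K}/\mathbb{Q}}(f_k(\alpha)) = \prod_i f_k(\alpha_i)$ has absolute value strictly below $1$ by the bounds recorded just before the lemma, and since $\Psi_k(1) = 1-k \neq 0$ none of the $f_k(\alpha_i)$ vanish, so the norm is a nonzero rational number of modulus $<1$ and cannot be an integer. Note, however, that the paper itself does not supply a proof of this lemma at all --- it is quoted verbatim as Lemma~2 of \cite{gomez} --- so there is no in-paper argument to compare yours against; your norm argument is in fact the standard one and is essentially what appears in the cited source.
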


Additionally, it was proven in \cite{bravo2014} that  
\begin{align}\label{3.5}
	L_n^{(k)} = \sum_{i=1}^k (2\alpha_i - 1) f_k(\alpha_i) \alpha_i^{n-1}, \quad  
	\left|L_n^{(k)} - f_k(\alpha)(2\alpha - 1) \alpha^{n-1} \right| < \frac{3}{2},
\end{align}
for all $k \geq 2$ and $n \geq {2-k}$. This leads to the expression  
\begin{align}\label{3.6}
	L_n^{(k)} = f_k(\alpha)(2\alpha - 1) \alpha^{n-1} + e_k(n), \quad \text{where} \quad |e_k(n)| < 1.5.
\end{align}
The left-hand expression in \eqref{3.5} is commonly referred to as the Binet-like formula for $L_n^{(k)}$. Furthermore, the inequality on the right in \eqref{3.6} indicates that the contribution of roots inside the unit circle to $L_n^{(k)}$ is negligible.
A better estimate than \eqref{3.5} appears in Section $3.3$  of \cite{Hbatte}, but with a more restricted range of $n$ in terms of $k$. It states that 
\begin{align}\label{pk_b1}
	\left| f_k(\alpha)(2\alpha-1)\alpha^{n-1}-3 \cdot2^{n-2}\right| <  3\cdot 2^{n-2}\cdot \frac{36}{2^{k/2}},\qquad {\text{\rm provided}}\qquad n<2^{k/2}.
\end{align}

\section{Methods}
\subsection{Linear Forms in Logarithms}
We utilize four specific cases of Baker-type lower bounds for nonvanishing linear forms in three logarithms of algebraic numbers. Several variants of such bounds are available in the literature, including those established by Baker and W{\"u}stholz \cite{BW} and Matveev \cite{Matveev}. Before presenting these inequalities, we first recall the notion of the height of an algebraic number, defined as follows.

\begin{definition}\label{def2.1l}
	Let \( \gamma \) be an algebraic number of degree \( d \) with minimal primitive polynomial over the integers 
	\[ a_{0}x^{d} + a_{1}x^{d-1} + \cdots + a_{d} = a_{0}\prod_{i=1}^{d}(x - \gamma^{(i)}), \] 
	where the leading coefficient \( a_{0} \) is positive. The logarithmic height of \( \gamma \) is then given by 
	\[ h(\gamma) := \dfrac{1}{d}\left(\log a_{0} + \sum_{i=1}^{d}\log \max\{|\gamma^{(i)}|, 1\} \right). \]
\end{definition}

In the special case where \( \gamma \) is a rational number expressed in lowest terms as \( \gamma = p/q \) with \( q \geq 1 \), we have \( h(\gamma) = \log \max\{|p|, q\} \). The logarithmic height function \( h(\cdot) \) satisfies the following properties, which will be used throughout this paper without further justification:
\begin{equation}\nonumber
	\begin{aligned}
		h(\gamma_{1} \pm \gamma_{2}) &\leq h(\gamma_{1}) + h(\gamma_{2}) + \log 2; \\
		h(\gamma_{1}\gamma_{2}^{\pm 1}) &\leq h(\gamma_{1}) + h(\gamma_{2}); \\
		h(\gamma^{s}) &= |s|h(\gamma) \quad \text{for any integer } s \in \mathbb{Z}.
	\end{aligned}
\end{equation}
Using these properties, it can be computed that 
\begin{align}\label{eqh}
	h\left(f_k(\alpha)\right) < 2\log k, \qquad \text{for all} \quad k\geq2 
\end{align}

A linear form in logarithms is an expression of the form
\begin{equation*}
	\Lambda := b_1\log \gamma_1 + \cdots + b_t\log \gamma_t,
\end{equation*}
where \( \gamma_1, \ldots, \gamma_t \) are positive real algebraic numbers and \( b_1, \ldots, b_t \) are nonzero integers. We assume \( \Lambda \neq 0 \) and seek lower bounds for \( |\Lambda| \). Let \( \mathbb{K} := \mathbb{Q}(\gamma_1, \ldots, \gamma_t) \) and denote by \( D \) the degree of \( \mathbb{K} \). We begin with the general result due to Matveev (Theorem 9.4 in \cite{Matveev}).

\begin{theorem}[Matveev, see Theorem 9.4 in \cite{Matveev}]
	\label{thm:Matl} 
	Let \( \Gamma := \gamma_1^{b_1} \cdots \gamma_t^{b_t} - 1 = e^{\Lambda} - 1 \), and assume \( \Gamma \neq 0 \). Then 
	\[
	\log |\Gamma| > -1.4 \cdot 30^{t+3} \cdot t^{4.5} \cdot D^2 (1 + \log D)(1 + \log B)A_1 \cdots A_t,
	\]
	where \( B \geq \max\{|b_1|, \ldots, |b_t|\} \) and \( A_i \geq \max\{Dh(\gamma_i), |\log \gamma_i|, 0.16\} \) for \( i = 1, \ldots, t \).
\end{theorem}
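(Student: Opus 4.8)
The plan is to prove this lower bound by the Gel'fond--Baker method of auxiliary functions, in the refined form that tracks all constants explicitly. Since $\Gamma = e^{\Lambda}-1$ with $\Lambda = b_1\log\gamma_1+\cdots+b_t\log\gamma_t$, and since $\log|\Gamma|\approx\log|\Lambda|$ when $\Lambda$ is small, it suffices to bound $|\Lambda|$ from below. I would argue by contradiction, assuming $|\Lambda|<\exp(-C\cdot D^2(1+\log D)(1+\log B)A_1\cdots A_t)$ with $C=1.4\cdot 30^{t+3}t^{4.5}$, work inside $\mathbb{K}=\mathbb{Q}(\gamma_1,\ldots,\gamma_t)$ of degree $D$, normalize so that $|b_t|$ is largest, and set $\beta_i=b_i/b_t$, so that $\sum_{i<t}\beta_i\log\gamma_i$ differs from $\log\gamma_t$ only by the tiny quantity $\Lambda/b_t$.

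\textbf{Auxiliary function.} Using Siegel's lemma (the Thue--Siegel--Dirichlet box principle) I would construct a nonzero polynomial with rational-integer coefficients $p(\lambda)$ of controlled partial degrees $L_i$ and controlled height, such that the entire function
\[
\Phi(z)=\sum_{\lambda_1=0}^{L_1}\cdots\sum_{\lambda_{t-1}=0}^{L_{t-1}} p(\lambda)\,\gamma_1^{\lambda_1 z}\cdots\gamma_{t-1}^{\lambda_{t-1} z}\,\gamma_t^{(\lambda_1\beta_1+\cdots+\lambda_{t-1}\beta_{t-1})z}
\]
together with its derivatives up to order $T$ vanishes at the integer points $z=1,\ldots,S$. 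The counting inequality that governs solvability of this linear system (number of unknown coefficients against number of arithmetic conditions, weighted by the heights $A_i$ and the degree $D$) is exactly what will force the numerical shape of the final constant. The role of the near-relation $\sum_{i<t}\beta_i\log\gamma_i\approx\log\gamma_t$ is that, up to an exponentially small error governed by $|\Lambda|$, the values $\Phi(z)$ at integers are algebraic numbers in $\mathbb{K}$ of bounded degree and height.

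\textbf{Analytic extrapolation.} Because each factor $\gamma_i^{\lambda_i z}$ differs from its ``honest'' value by a factor controlled by $\Lambda$, the function $\Phi$ and its low-order derivatives are extraordinarily small at the initial points. Applying the Schwarz lemma (or an interpolation-determinant estimate) on a disc of larger radius, this analytic smallness propagates, so that at a larger set of points $z=1,\ldots,S'$ with $S'>S$ the corresponding algebraic values of $\Phi$ are smaller than the Liouville bound for a nonzero algebraic number of their degree and denominator; hence they must vanish. Iterating this extrapolation enlarges the set of forced zeros while keeping the degrees $L_i$ fixed. Finally, a multiplicity/zero estimate on the commutative algebraic group $\mathbb{G}_m^{\,t}$ (of Philippon--Masser--W\"ustholz type) bounds the number of zeros of such a combination of characters in terms of the $L_i$; the assumed smallness of $|\Lambda|$ overshoots this bound, forcing the auxiliary polynomial to vanish identically and contradicting its construction. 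Optimizing the free parameters $L_i,T,S,S'$ against $D$, $B$, and the $A_i$ is what yields the precise constant $1.4\cdot 30^{t+3}t^{4.5}$ rather than a weaker Baker-type dependence.

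The hard part is two-fold. First is the zero (multiplicity) estimate in a sufficiently sharp and fully explicit form: this is itself a substantial theorem in the geometry of algebraic groups and is the genuine technical heart of the argument. Second is the relentless bookkeeping of the parameter optimization, where every constant must be carried through Siegel's lemma, the height estimates for the values of $\Phi$, and the Schwarz/interpolation step so as to land exactly on Matveev's stated dependence on $t$, $D$, $B$, and $A_1\cdots A_t$. Because a self-contained derivation of these constants is extremely long, in the present paper I would apply the result as stated in \cite{Matveev} as a black box rather than reprove it.
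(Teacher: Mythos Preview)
The paper does not prove this statement at all: Theorem~\ref{thm:Matl} is quoted verbatim from the literature (as ``Theorem 9.4 in \cite{Matveev}'') and is used throughout Section~4 purely as a black box. Your final sentence already aligns with this, so in that sense your proposal matches the paper exactly.

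The preceding two pages of your proposal---the auxiliary function, Siegel's lemma, Schwarz extrapolation, and the zero estimate on $\mathbb{G}_m^{\,t}$---are a faithful high-level outline of the Gel'fond--Baker machinery behind Matveev's theorem, but none of it appears in the paper, and none of it is needed: the authors simply invoke the inequality and move on. If the assignment was to compare with the paper's own proof, the honest answer is that there is nothing to compare; you should state up front that the result is cited without proof and omit the sketch.
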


\subsection{Reduction Techniques}
The bounds obtained from Matveev's theorem are frequently too large for practical computation. To address this, we implement a refinement strategy based on the LLL-algorithm. We first provide some necessary background.

Let \( k \) be a positive integer. A subset \( \mathcal{L} \) of the real vector space \( \mathbb{R}^k \) is called a \emph{lattice} if there exists a basis \( \{b_1, \ldots, b_k\} \) of \( \mathbb{R}^k \) such that
\[
\mathcal{L} = \sum_{i=1}^{k} \mathbb{Z} b_i = \left\{ \sum_{i=1}^{k} r_i b_i \mid r_i \in \mathbb{Z} \right\}.
\]
The vectors \( b_1, \ldots, b_k \) are said to form a basis for \( \mathcal{L} \), and \( k \) is termed the rank of \( \mathcal{L} \). The determinant \( \text{det}(\mathcal{L}) \) is defined as
\[
\text{det}(\mathcal{L}) = | \det(b_1, \ldots, b_k) |,
\]
where the \( b_i \) are represented as column vectors. This value is independent of the choice of basis (see \cite{Cas}, Section 1.2).

For linearly independent vectors \( b_1, \ldots, b_k \) in \( \mathbb{R}^k \), we employ the Gram-Schmidt orthogonalization process to define vectors \( b^*_i \) and coefficients \( \mu_{i,j} \) via
\[
b^*_i = b_i - \sum_{j=1}^{i-1} \mu_{i,j} b^*_j, \quad \mu_{i,j} = \dfrac{\langle b_i, b^*_j \rangle}{\langle b^*_j, b^*_j \rangle},
\]
where \( \langle \cdot, \cdot \rangle \) is the standard inner product on \( \mathbb{R}^k \). Here, \( b^*_i \) represents the orthogonal projection of \( b_i \) onto the complement of the subspace spanned by \( b_1, \ldots, b_{i-1} \), ensuring orthogonality.

\begin{definition}
	A basis \( b_1, \ldots, b_n \) for the lattice \( \mathcal{L} \) is called reduced if
	\begin{align*}
		| \mu_{i,j} | &\leq \frac{1}{2} \quad \text{for} \quad 1 \leq j < i \leq n, \\
		\|b^*_{i} + \mu_{i,i-1} b^*_{i-1}\|^2 &\geq \frac{3}{4} \|b^*_{i-1}\|^2 \quad \text{for} \quad 1 < i \leq n,
	\end{align*}
	where \( \| \cdot \| \) is the Euclidean norm. The constant \( 3/4 \) can be replaced by any fixed \( y \in (1/4, 1) \) (see \cite{LLL}, Section 1).
\end{definition}

For a \( k \)-dimensional lattice \( \mathcal{L} \subseteq \mathbb{R}^k \) with reduced basis \( b_1, \ldots, b_k \), let \( B \) be the matrix with columns \( b_1, \ldots, b_k \). Define
\[
l(\mathcal{L}, y) = \begin{cases}
	\min_{x \in \mathcal{L}} \|x - y\| & \text{if } y \notin \mathcal{L}, \\
	\min_{0 \neq x \in \mathcal{L}} \|x\| & \text{if } y \in \mathcal{L},
\end{cases}
\]
where \( \| \cdot \| \) is the Euclidean norm. The LLL algorithm efficiently computes a lower bound \( l(\mathcal{L}, y) \geq c_1 \) in polynomial time (see \cite{SMA}, Section V.4).

\begin{lemma}\label{lem2.5m}
	Let \( y \in \mathbb{R}^k \) and \( z = B^{-1}y \) with \( z = (z_1, \ldots, z_k)^T \). 
	\begin{enumerate}[(i)]
		\item If \( y \notin \mathcal{L} \), let \( i_0 \) be the largest index with \( z_{i_0} \neq 0 \) and set \( \lambda := \{z_{i_0}\} \), where \( \{\cdot\} \) denotes the fractional part.
		\item If \( y \in \mathcal{L} \), set \( \lambda := 1 \).
	\end{enumerate}
	Then, 
	\[
	c_1 := \max_{1 \leq j \leq k} \left\{ \dfrac{\|b_1\|}{\|b_j^*\|} \right\} \quad \text{and} \quad \delta := \lambda \dfrac{\|b_1\|}{c_1}.
	\]
\end{lemma}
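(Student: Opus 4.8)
The content of Lemma \ref{lem2.5m} is the lower bound $l(\mathcal{L},y)\ge\delta$ (the displayed formulas merely name the data $c_1$ and $\delta$ entering that bound), and this inequality is what I would prove. The first thing I would record is that, since $\|b_1\|$ is constant in the numerator defining $c_1=\max_{1\le j\le k}\|b_1\|/\|b_j^*\|$, the maximum is attained at the smallest $\|b_j^*\|$, so
\[
\frac{\|b_1\|}{c_1}=\min_{1\le j\le k}\|b_j^*\|.
\]
Hence the claim reduces to $l(\mathcal{L},y)\ge\lambda\,\min_{1\le j\le k}\|b_j^*\|$, and the whole proof rests on a single elementary fact about Gram--Schmidt coordinates, which I would isolate first.

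The fact is: for any real vector $v=\sum_{i=1}^{k}\xi_i b_i$, if $s$ is the largest index with $\xi_s\ne0$, then $\|v\|\ge|\xi_s|\,\|b_s^*\|$. To see this I would substitute $b_i=b_i^*+\sum_{j<i}\mu_{i,j}b_j^*$ into $v=\sum_{i\le s}\xi_i b_i$; the vector $b_s^*$ appears only through the term $\xi_s b_s$ (for $i<s$ the expansion of $b_i$ involves only $b_j^*$ with $j\le i<s$, and $\xi_i=0$ for $i>s$), so $v=\xi_s b_s^*+\sum_{j<s}(\cdots)\,b_j^*$, and orthogonality of the $b_j^*$ gives $\|v\|^2\ge\xi_s^2\|b_s^*\|^2$. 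I would stress that this step uses only orthogonality and is independent of reducedness; the reduced-basis hypothesis enters only in keeping $c_1$ from being astronomically large, which is what makes $\delta$ a usable bound in practice.

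With this projection fact in hand the two cases are short. If $y\in\mathcal{L}$ (so $\lambda=1$), I would take any nonzero $x=\sum r_i b_i\in\mathcal{L}$ with $r_i\in\mathbb{Z}$ and let $s$ be the largest index with $r_s\ne0$; since $|r_s|\ge1$, the projection fact gives $\|x\|\ge\|b_s^*\|\ge\min_j\|b_j^*\|=\delta$, so $l(\mathcal{L},y)=\min_{0\ne x}\|x\|\ge\delta$. If $y\notin\mathcal{L}$, I would write $y=\sum z_i b_i$, take $i_0$ and $\lambda=\{z_{i_0}\}$ as in the statement, fix an arbitrary $x=\sum r_i b_i\in\mathcal{L}$, and apply the fact to $v=y-x=\sum(z_i-r_i)b_i$ with $s$ the largest index where $z_s-r_s\ne0$. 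I would then split on $s$: if $s>i_0$ then $z_s=0$ forces $|z_s-r_s|=|r_s|\ge1$, so $\|y-x\|\ge\min_j\|b_j^*\|\ge\lambda\min_j\|b_j^*\|=\delta$; if $s=i_0$ then $|z_{i_0}-r_{i_0}|\ge\lambda$ gives $\|y-x\|\ge\lambda\|b_{i_0}^*\|\ge\delta$. The case $s<i_0$ cannot occur when $\lambda>0$, as it would force the integer $r_{i_0}$ to equal the non-integer $z_{i_0}$. Minimizing over $x$ yields $l(\mathcal{L},y)\ge\delta$.

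The only genuine obstacle is the estimate $|z_{i_0}-r_{i_0}|\ge\lambda$ in the case $s=i_0$. Since $r_{i_0}$ ranges over all integers, the sharp lower bound for $|z_{i_0}-r_{i_0}|$ is the distance from $z_{i_0}$ to the nearest integer, $\min(\{z_{i_0}\},\,1-\{z_{i_0}\})$, which is strictly smaller than the fractional part $\{z_{i_0}\}$ when $\{z_{i_0}\}>1/2$. I would therefore interpret the symbol $\{\cdot\}$ in the statement as the distance to the nearest integer, following the convention of de Weger and Smart, so that $\lambda$ is exactly this sharp quantity and every inequality above is valid; with that reading the proof closes, and everything else is routine bookkeeping in Gram--Schmidt coordinates.
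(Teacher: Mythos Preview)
The paper does not supply its own proof of this lemma; it is quoted as a standard tool from Smart's book (the reference [SMA], Section~V.4), and the displayed formulas in the statement are merely the definitions of the auxiliary quantities $c_1$ and $\delta$ that feed into the next lemma. Your reading that the substantive content is the inequality $l(\mathcal{L},y)\ge\delta$ is correct, and your argument---the Gram--Schmidt projection estimate $\|v\|\ge|\xi_s|\,\|b_s^*\|$ at the leading nonzero coordinate, followed by the case split on the position of that index relative to $i_0$---is exactly the standard proof one finds in the cited sources, so there is nothing to compare against here beyond noting that you have reconstructed the intended argument.

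Your closing caveat is also on target and worth keeping explicit: the bound holds as written only if $\{\cdot\}$ is interpreted as the distance to the nearest integer (the convention in de~Weger and Smart), not the ordinary fractional part. Under the literal fractional-part reading, the case $s=i_0$ with $r_{i_0}=\lceil z_{i_0}\rceil$ and $\{z_{i_0}\}>1/2$ gives $|z_{i_0}-r_{i_0}|=1-\{z_{i_0}\}<\{z_{i_0}\}=\lambda$, and the inequality fails. So your reinterpretation is not a convenience but a necessity, and the lemma as printed in the paper is strictly speaking misstated on this point.
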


In applications, we encounter real numbers \( \eta_0, \eta_1, \ldots, \eta_k \), linearly independent over \( \mathbb{Q} \), and constants \( c_3, c_4 > 0 \) such that
\begin{align}\label{2.9m}
	|\eta_0 + x_1 \eta_1 + \cdots + x_k \eta_k| \leq c_3 \exp(-c_4 H),
\end{align}
where \( |x_i| \leq X_i \) for given bounds \( X_i \) (\( 1 \leq i \leq k \)). Let \( X_0 := \max_{1 \leq i \leq k} \{X_i\} \). Following \cite{Weg}, we approximate \eqref{2.9m} using a lattice generated by the columns of
$$ \mathcal{A}=\begin{pmatrix}
	1 & 0 &\ldots& 0 & 0 \\
	0 & 1 &\ldots& 0 & 0 \\
	\vdots & \vdots &\vdots& \vdots & \vdots \\
	0 & 0 &\ldots& 1 & 0 \\
	\lfloor C\eta_1\rfloor & \lfloor C\eta_2\rfloor&\ldots & \lfloor C\eta_{k-1}\rfloor& \lfloor C\eta_{k} \rfloor
\end{pmatrix} ,$$
where \( C \) is a large constant (typically \( \approx X_0^k \)). For an LLL-reduced basis \( b_1, \ldots, b_k \) of \( \mathcal{L} \) and \( y = (0, \ldots, -\lfloor C \eta_0 \rfloor) \), Lemma \ref{lem2.5m} yields a lower bound \( l(\mathcal{L}, y) \geq c_1 \). The following result is Lemma VI.1 in \cite{SMA}.

\begin{lemma}[Lemma VI.1 in \cite{SMA}]\label{lem2.6m}
	Let \( S := \sum_{i=1}^{k-1} X_i^2 \) and \( T := \dfrac{1 + \sum_{i=1}^{k} X_i}{2} \). If \( \delta^2 \geq T^2 + S \), then \eqref{2.9m} implies either \( x_1 = \cdots = x_{k-1} = 0 \) and \( x_k = -\dfrac{\lfloor C \eta_0 \rfloor}{\lfloor C \eta_k \rfloor} \), or
	\[
	H \leq \dfrac{1}{c_4} \left( \log(C c_3) - \log \left( \sqrt{\delta^2 - S} - T \right) \right).
	\]
\end{lemma}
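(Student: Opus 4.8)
The plan is to recast the analytic inequality \eqref{2.9m} as a statement about the distance from the target point $y$ to the lattice $\mathcal{L}$, and then feed the lower bound for that distance coming from Lemma \ref{lem2.5m} into the resulting estimate. Writing $b_1,\dots,b_k$ for the columns of $\mathcal{A}$ and attaching to an integer vector $(x_1,\dots,x_k)$ the lattice point $\mathbf{x}=\sum_{i=1}^k x_i b_i$, I would first read off the coordinates of $\mathbf{x}$ from the shape of $\mathcal{A}$: its first $k-1$ entries are $x_1,\dots,x_{k-1}$, and its last entry is $\sum_{i=1}^k x_i\lfloor C\eta_i\rfloor$. Since $y=(0,\dots,0,-\lfloor C\eta_0\rfloor)^{T}$, this yields the identity
\[
\|\mathbf{x}-y\|^2=\sum_{i=1}^{k-1}x_i^2+\Theta^2,\qquad \Theta:=\lfloor C\eta_0\rfloor+\sum_{i=1}^k x_i\lfloor C\eta_i\rfloor .
\]

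The key is to control $\Theta$ by the linear form. Writing each $\lfloor C\eta_i\rfloor=C\eta_i-\varepsilon_i$ with $|\varepsilon_i|\le\tfrac12$ (rounding to the nearest integer), the genuine terms reassemble into $C\big(\eta_0+\sum_{i=1}^k x_i\eta_i\big)=C\Lambda$, so that $\Theta=C\Lambda-\big(\varepsilon_0+\sum_{i=1}^k x_i\varepsilon_i\big)$. Invoking \eqref{2.9m} and $|x_i|\le X_i$ then gives the upper estimate
\[
|\Theta|\le C\,c_3\exp(-c_4 H)+\tfrac12\Big(1+\sum_{i=1}^k X_i\Big)=C\,c_3\exp(-c_4 H)+T .
\]
For the matching lower estimate I would split into two cases. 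If $\mathbf{x}=y$, then comparing coordinates forces $x_1=\cdots=x_{k-1}=0$ and $\Theta=0$, i.e.\ $x_k=-\lfloor C\eta_0\rfloor/\lfloor C\eta_k\rfloor$; this is the first alternative in the statement. Otherwise $\mathbf{x}\neq y$, and Lemma \ref{lem2.5m} supplies $\|\mathbf{x}-y\|\ge\delta$. Combining this with the norm identity and $\sum_{i=1}^{k-1}x_i^2\le S$ gives $\Theta^2\ge\delta^2-S$, while the hypothesis $\delta^2\ge T^2+S$ ensures $\delta^2-S\ge T^2>0$, whence $|\Theta|\ge\sqrt{\delta^2-S}$.

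Chaining the two estimates for $|\Theta|$ produces $\sqrt{\delta^2-S}-T\le C\,c_3\exp(-c_4 H)$, where the left-hand side is strictly positive by hypothesis; taking logarithms and isolating $H$ delivers precisely
\[
H\le\frac{1}{c_4}\Big(\log(Cc_3)-\log\big(\sqrt{\delta^2-S}-T\big)\Big),
\]
which completes the dichotomy. I would expect the delicate points to be bookkeeping rather than conceptual, the distance bound itself being already packaged in Lemma \ref{lem2.5m}. The first is the rounding step: one must round each $C\eta_i$ to the nearest integer so that $|\varepsilon_i|\le\tfrac12$, landing on the constant $T=(1+\sum X_i)/2$ exactly; a careless bound $|\varepsilon_i|<1$ would replace $T$ by $2T$ and weaken both the hypothesis and the conclusion. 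The second is isolating the exceptional case $\mathbf{x}=y$ and checking that it corresponds exactly to $x_1=\cdots=x_{k-1}=0$, which is where the identity-block structure of $\mathcal{A}$ above the floor-row is used; everything else is the norm identity together with a monotone application of $\log$.
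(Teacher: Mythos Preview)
The paper does not supply its own proof of this lemma: it is simply quoted as Lemma~VI.1 of \cite{SMA} and used as a black box in the reduction steps. Your reconstruction is the standard argument behind that result and is correct; in particular, the decomposition $\|\mathbf{x}-y\|^2=\sum_{i=1}^{k-1}x_i^2+\Theta^2$, the estimate $|\Theta|\le Cc_3e^{-c_4H}+T$ via the rounding errors, and the dichotomy between $\mathbf{x}=y$ and $\|\mathbf{x}-y\|\ge\delta$ are exactly how the proof goes in Smart's book. One cosmetic point: the paper writes $\lfloor\,\cdot\,\rfloor$ throughout, which in ordinary usage is the floor, yet the constant $T=\tfrac12(1+\sum X_i)$ only matches nearest-integer rounding, as you correctly flag; this is a notational quirk inherited from the source rather than a flaw in your argument.
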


All computations in this work were performed using Python.

\section{Proof of Theorem \ref{thm1.1l}}
\subsection{Bounds for \(n\) in relation to \(\ell\) and \(m\)}

From Eq.~\eqref{eq:main}, it follows that the palindromic representation of \(L_n^{(k)}\) consists of exactly \(2\ell + m\) digits. Consequently, any solution to \eqref{eq:main} must satisfy the inequalities
\[
10^{2\ell + m - 1} < L_n^{(k)} < 10^{2\ell + m}.
\]
By combining these bounds with \eqref{2.2}, we derive
\[
10^{2\ell + m - 1} < L_n^{(k)} \leq 2\alpha^{n} \quad \text{and} \quad \alpha^{n - 1} \leq L_n^{(k)} < 10^{2\ell + m}.
\]
Taking logarithms of these expressions yields the double inequality
\begin{equation}\label{4.1}
2\ell + m -3 < n < 5(2\ell + m) + 1,
\end{equation}
which holds for all integers \(n \geq 8\) and \(\ell, m \geq 1\).
\subsection{The case $n\leq k$}
Combining \eqref{eq:main} with \eqref{eq2.2}, we obtain the representation:
\begin{align}
	3\cdot 2^{n - 2} &= \underbrace{d_1\ldots d_1}_{\ell\ \text{digits}} \underbrace{d_2\ldots d_2}_{m\ \text{digits}} \underbrace{d_1\ldots d_1}_{\ell\ \text{digits}} \notag\\
	&= \underbrace{d_1\ldots d_1}_{\ell} \cdot 10^{\ell + m} + \underbrace{d_2\ldots d_2}_{m} \cdot 10^\ell + \underbrace{d_1\ldots d_1}_{\ell} \notag\\
	&= d_1 \left( \frac{10^\ell - 1}{9} \right) \cdot 10^{\ell + m} + d_2 \left( \frac{10^m - 1}{9} \right) \cdot 10^\ell + d_1 \left( \frac{10^\ell - 1}{9} \right).
	\label{eq:pal}
\end{align}

This expression can be simplified to:
\[
27 \cdot 2^{n - 2} = d_1 (10^\ell - 1) \cdot 10^{\ell + m} + d_2 (10^m - 1) \cdot 10^\ell + d_1 (10^\ell - 1),
\]
with the conditions \( d_1, d_2 \in \{0, 1, \ldots, 9\} \), \( d_1 > 0 \), \( d_1 \ne d_2 \), and \( \ell, m \geq 1 \).
Considering the equation modulo \( 10^\ell \) yields:
\[
27 \cdot 2^{n - 2} \equiv d_1 (10^\ell - 1) \pmod{10^\ell}.
\]
For \( n \geq 6 \), observe that both \( 27 \cdot 2^{n - 2} \) and \( 10^\ell \) are divisible by 16 when \( \ell \geq 4 \). However, \( d_1 (10^\ell - 1) \) cannot be divisible by 16 since \( 10^\ell - 1 \) is odd and \( d_1 < 10 \). This contradiction establishes that \( \ell \leq 3 \) for \( n \geq 6 \).

An alternative formulation of \eqref{eq:pal} gives:
\[
27 \cdot 2^{n - 2} = (d_1 \cdot 10^\ell + (d_2 - d_1)) \cdot 10^{\ell + m} + (d_1 - d_2) \cdot 10^\ell - d_1.
\]
The quantity \( |(d_1 - d_2) \cdot 10^\ell - d_1| \) is bounded above by \( 9 \cdot 10^3 + 9 < 2^{14}  \) in absolute value. Consequently, for \( n \geq 16 \), the term \( 27 \cdot 2^{n - 2} \) must be divisible by $2^{14}$. On the right-hand side above, the first term is divisible by $2^{14}$ when \( \ell + m \geq 14 \), but the final term $(d_1 - d_2) \cdot 10^\ell - d_1$ fails this divisibility condition for \( \ell \in \{1, 2, 3\} \). We therefore conclude that \( \ell + m \leq 13 \), implying \( m \leq 12 \) when \( n \geq 16 \).

To finalize our analysis, we conducted an exhaustive search using Python to examine all palindromic forms satisfying \eqref{eq:main} with \( \ell \leq 3 \) and \( m \leq 12 \). Our computations reveal that no such palindromic numbers exist as exactly having prime divisors 3 and 2. 

\subsection{The case \( n \geq k + 1 \)}
\subsubsection{A bound on \( n \) in terms of \( \ell \)}

We establish a series of auxiliary results to derive the desired estimates. First, we prove the following lemma.

\begin{lemma}\label{lem:l}
	Let $(\ell, m, n, k)$ be a solution to the Diophantine equation \eqref{eq:main} with $n \ge 8$, $k \ge 3$, and $n \ge k+1$. Then
	$$\ell < 2.62 \cdot 10^{12}k^4 (\log k)^2 \log n.$$
\end{lemma}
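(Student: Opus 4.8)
The plan is to exploit the palindromic structure of \eqref{eq:main} together with the Binet-like approximation \eqref{3.6} to produce a linear form in three logarithms, and then apply Matveev's Theorem \ref{thm:Matl}. First I would rewrite the right-hand side of \eqref{eq:main} compactly. Writing $D_1 := d_1(10^\ell-1)/9$ and $D_2 := d_2(10^m-1)/9$, one has
\[
L_n^{(k)} = D_1\cdot 10^{\ell+m} + D_2\cdot 10^{\ell} + D_1
= \frac{d_1}{9}\,10^{2\ell+m} + \Big(\frac{d_2-d_1}{9}\Big)10^{\ell+m} + \Big(\frac{d_1-d_2}{9}\Big)10^{\ell} - \frac{d_1}{9}.
\]
The dominant term on the right is $\frac{d_1}{9}10^{2\ell+m}$. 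Substituting \eqref{3.6}, namely $L_n^{(k)} = f_k(\alpha)(2\alpha-1)\alpha^{n-1} + e_k(n)$ with $|e_k(n)|<1.5$, and dividing through by $\frac{d_1}{9}10^{2\ell+m}$, I obtain an estimate of the shape
\[
\left| f_k(\alpha)(2\alpha-1)\alpha^{n-1}\cdot \frac{9}{d_1}\cdot 10^{-(2\ell+m)} - 1 \right| < \frac{c}{10^{\min\{m,\ell\}}}
\]
for an absolute constant $c$ (the error being controlled by the subdominant powers $10^{\ell+m}$, $10^\ell$, the constant term, and $e_k(n)$, all divided by $10^{2\ell+m}$; here I would use \eqref{4.1} to guarantee $10^{2\ell+m}$ genuinely dominates). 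This gives $\Gamma := \gamma_1^{b_1}\gamma_2^{b_2}\gamma_3^{b_3} - 1$ with $\gamma_1 = \alpha$, $\gamma_2 = 10$, $\gamma_3 = f_k(\alpha)(2\alpha-1)\cdot 9/d_1$ (or one may split off $f_k(\alpha)$ and $9/d_1$ separately and keep four logarithms — but three suffices by folding the rational part into $\gamma_3$), and exponents $b_1 = n-1$, $b_2 = -(2\ell+m)$, $b_3 = 1$.

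Next I would check $\Gamma \neq 0$: if $\Gamma = 0$ then $f_k(\alpha)(2\alpha-1)\alpha^{n-1} = \frac{d_1}{9}10^{2\ell+m}$ would be rational, forcing (via the conjugate embeddings and the fact from Lemma "Lemma 2, \cite{gomez}" that $f_k(\alpha)$ is not an algebraic integer, plus irreducibility of $\Psi_k$) a contradiction, since $\alpha$ has degree $k\ge 3$ over $\mathbb{Q}$ and the other conjugates would have to cancel. Then I apply Matveev with $t=3$, $D = k$ (the degree of $\mathbb{Q}(\alpha)$, which contains $10$ and the rational multiple), $B = n-1$ (using \eqref{4.1} to note $2\ell+m < n$ up to a constant, so $n-1$ dominates). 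The heights are the routine part: $h(\gamma_1) = h(\alpha) = \frac{1}{k}\log\alpha < \frac{\log 2}{k}$, $h(\gamma_2) = \log 10$, and for $\gamma_3$ I would use \eqref{eqh}, $h(f_k(\alpha)) < 2\log k$, together with $h(2\alpha-1) \le h(2) + h(\alpha) + \log 2 \le 2\log 2$ and $h(9/d_1)\le \log 9$, giving $h(\gamma_3) < 2\log k + c'$ and hence $A_3 \asymp k\log k$; meanwhile $A_1 \asymp \log 2$, $A_2 \asymp k\log 10$. Matveev then yields
\[
\log|\Gamma| > -C \cdot k^2(1+\log k)(1+\log n)\cdot (\log 2)(k\log 10)(k\log k)
\]
for an explicit constant $C = 1.4\cdot 30^{6}\cdot 3^{4.5}$. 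Comparing with the upper bound $\log|\Gamma| < \log c - \min\{\ell,m\}\log 10$ and rearranging gives $\min\{\ell,m\} \ll k^4(\log k)^2\log n$ — and in the regime where $\ell \le m$ this is exactly the claimed bound on $\ell$; the constant $2.62\cdot 10^{12}$ is what one gets after tracking $C$, the factor $1/\log 10$, and absorbing $1+\log n$ into $\log n$ and $1+\log k$ into $(\log k)$ harmlessly (possibly using $n\ge 8$, $k\ge 3$ to clean up the "$+1$"s).

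The main obstacle — and the reason the lemma only bounds $\ell$ and not $\min\{\ell,m\}$ directly in a symmetric way — is ensuring the error term in the linear form is controlled by $10^{-\ell}$ (not merely $10^{-\min\{\ell,m\}}$) so that the conclusion is genuinely a bound on $\ell$. This requires looking more carefully at which subdominant term dominates the error: the term $\frac{d_1-d_2}{9}10^\ell$ and the constant $\frac{d_1}{9}$ are $O(10^\ell)$, while $\frac{d_2-d_1}{9}10^{\ell+m}$ is $O(10^{\ell+m})$ — so after dividing by $10^{2\ell+m}$ the worst contribution is $O(10^{-\ell})$, which is exactly what is needed; one must also confirm $e_k(n)/10^{2\ell+m}$ is dominated by this, which follows from $10^{2\ell+m} > L_n^{(k)}/10 \ge \alpha^{n-1}/10$ and $n$ being large. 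A secondary technical point is justifying $D = k$ rather than $2k$ or more: since $10$ and the rational factor $9/d_1$ lie in $\mathbb{Q}\subset\mathbb{Q}(\alpha)$, and $f_k(\alpha)(2\alpha-1)\in\mathbb{Q}(\alpha)$, all three $\gamma_i$ lie in the degree-$k$ field $\mathbb{Q}(\alpha)$, so $D=k$ is legitimate; this is what keeps the final constant at the $10^{12}$ scale rather than blowing up.
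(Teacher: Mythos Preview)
Your proposal is correct and follows essentially the same route as the paper: isolate the dominant term $\tfrac{d_1}{9}10^{2\ell+m}$, use \eqref{3.6} to produce $\Gamma_1$ with $|\Gamma_1|<11\cdot 10^{-\ell}$, and apply Matveev with $t=3$, $D=k$, and the same choice of $\gamma_i$'s and heights. The only wobble is your nonvanishing argument---the algebraic-integer lemma is irrelevant here; the paper simply applies a Galois automorphism $\alpha\mapsto\alpha_j$ (with $|\alpha_j|<1$) to the putative equality and observes that $10^3 \le d_1\cdot 10^{2\ell+m} = |9f_k(\alpha_j)(2\alpha_j-1)\alpha_j^{n-1}| < 9$ is absurd.
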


\begin{proof}
	Consider positive integers $n$ and $k$ satisfying \eqref{eq:main} with $n \ge k+1$ and $k \ge 3$. Rewriting \eqref{3.5} using \eqref{eq:pal}, we obtain
	\begin{align*}
		\left|\dfrac{1}{9}\left(d_1 \cdot 10^{2\ell+m} - (d_1-d_2) \cdot 10^{\ell+m} + (d_1-d_2) \cdot 10^{\ell} - d_1 \right) - f_k(\alpha)(2\alpha-1)\alpha^{n-1}\right| < \dfrac{3}{2}.
	\end{align*}
	Consequently, for all $\ell, m \ge 1$, we have
	\begin{align*}
		\left|f_k(\alpha)(2\alpha-1)\alpha^{n-1} - \dfrac{1}{9}\left(d_1 \cdot 10^{2\ell+m}\right)\right|
		&< \dfrac{3}{2} + \dfrac{1}{9}\left(|d_1-d_2| \cdot 10^{\ell+m} + |d_1-d_2| \cdot 10^{\ell} + d_1 \right) \\
		&\le \dfrac{3}{2} + 10^{\ell+m} + 10^{\ell} + 1 < 1.2 \cdot 10^{\ell+m}.
	\end{align*}
	Dividing both sides by $(d_1 \cdot 10^{2\ell+m})/9$ yields
	\begin{align}\label{gam1}
		\left|\Gamma_1\right| := \left|\dfrac{9f_k(\alpha)(2\alpha-1)}{d_1} \cdot 10^{-2\ell-m}\alpha^{n-1} - 1\right|
		&< 11 \cdot 10^{-\ell}.
	\end{align}
	Note that $\Gamma_1 \ne 0$, since otherwise we would obtain
	\[
	d_1 \cdot 10^{2\ell + m} = 9 f_k(\alpha)(2\alpha-1) \alpha^{n-1}.
	\]
	Applying a Galois automorphism from the decomposition field of $\Psi_k(x)$ over $\mathbb{Q}$ that maps $\alpha$ to a conjugate $\alpha_j$ for some $2 \leq j \leq k$, and taking absolute values, we arrive at the inequality
	\[
	10^3 \le d_1 \cdot 10^{2\ell + m} = \left| 9 f_k(\alpha_j)(2\alpha_j-1) \alpha_j^{n-1} \right| < 9,
	\]
	which is impossible.
	
	The relevant algebraic number field is $\mathbb{K} := \mathbb{Q}(\alpha)$ with degree $D = k$. Setting $t := 3$, we define
	\begin{equation}\nonumber
		\begin{aligned}
			\gamma_{1} &:= ({9f_k(\alpha)(2\alpha-1)})/d_1, \quad \gamma_{2} := \alpha, \qquad \gamma_{3} := 10, \\
			b_{1} &:= 1, \qquad \qquad b_{2} := n-1, \qquad b_{3} := -2\ell-m.
		\end{aligned}
	\end{equation}
	Using \eqref{eqh}, we estimate
	\[
	h(\gamma_1) = h\left({9f_k(\alpha)(2\alpha-1)})/d_1\right) < \log 9 + 3 \log k + \log 9 + \log 2 + \log 2 + \log 1 + (\log \alpha)/k < 9\log k + 0.7/k.
	\]
	Additionally, $h(\gamma_2) = h(\alpha) < (\log \alpha)/k < 0.7/k < 1/k$ and $h(\gamma_3) = h(10) = \log 10$. Thus, we take $A_1 := 9k\log k + 0.7$, $A_2 := 0.7$, and $A_3 := k \log 10$. Since \eqref{4.1} implies $2\ell + m < n$, we set $B := n$. Applying Theorem \ref{thm:Matl}, we obtain
	\begin{align}\label{gam2}
		\log |\Gamma_1| &> -1.4 \cdot 30^{6} \cdot 3^{4.5} \cdot k^2 (1 + \log k)(1 + \log n) \cdot (9k\log k + 0.7) \cdot 0.7 \cdot k \log 10 \nonumber \\
		&> -6.02 \cdot 10^{12}k^4 (\log k)^2 \log n.
	\end{align}
	for all $k \ge 3$ and $n \ge 8$. Comparing \eqref{gam1} and \eqref{gam2}, we derive
	\begin{align*}
		\ell \log 10 - \log 11 &< 	6.02 \cdot 10^{12}k^4 (\log k)^2 \log n,
	\end{align*}
	which implies $\ell < 2.62 \cdot 10^{12}k^4 (\log k)^2 \log n$.
\end{proof}

Next, we establish the following result.

\begin{lemma}\label{lem:m}
	Let $(\ell, m, n, k)$ be a solution to \eqref{eq:main} with $n \ge 8$, $k \ge 3$, and $n \ge k+1$. Then
	$$m < 1.73 \cdot 10^{24}k^8 (\log k)^3 (\log n)^2.$$
\end{lemma}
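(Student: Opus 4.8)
The plan is to repeat the linear-forms-in-logarithms argument of Lemma \ref{lem:l}, but now isolating the middle repdigit block rather than just the leading one. Starting again from the identity \eqref{eq:pal}, rewritten as in the proof of Lemma \ref{lem:l}, I would this time group the two terms that carry the full weight $10^{\ell+m}$ together with the $10^{2\ell+m}$ term and push the genuinely small tail (the $10^\ell$ and constant terms) to the error side. Concretely, factoring $10^{\ell}$ out of the dominant part, one gets an expression of the shape
\[
\left| f_k(\alpha)(2\alpha-1)\alpha^{n-1} - \frac{1}{9}\bigl(d_1\cdot 10^{\ell} + (d_2-d_1)\bigr)\cdot 10^{\ell+m}\right| < C\cdot 10^{\ell}
\]
for an explicit small constant $C$, and dividing through by $\tfrac19(d_1 10^\ell + (d_2-d_1))10^{\ell+m}$ produces
\[
|\Gamma_2| := \left|\frac{9 f_k(\alpha)(2\alpha-1)}{d_1\cdot 10^\ell + (d_2-d_1)}\cdot 10^{-\ell-m}\alpha^{n-1} - 1\right| < C'\cdot 10^{-m}.
\]

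Then I would check $\Gamma_2\neq 0$ by the same Galois-conjugate trick used for $\Gamma_1$: if it vanished, applying an automorphism sending $\alpha\mapsto\alpha_j$ with $2\le j\le k$ gives $|9 f_k(\alpha_j)(2\alpha_j-1)\alpha_j^{n-1}| = (d_1 10^\ell + d_2-d_1)10^{\ell+m}$, whose left side is bounded by a constant while the right side is at least $100$, a contradiction. Next I apply Matveev's Theorem \ref{thm:Matl} with $t=3$, $\mathbb{K}=\mathbb{Q}(\alpha)$, $D=k$, $B:=n$ (using $\ell+m<n$ from \eqref{4.1}), and the three algebraic numbers $\gamma_1 := 9f_k(\alpha)(2\alpha-1)/(d_1 10^\ell + d_2 - d_1)$, $\gamma_2:=\alpha$, $\gamma_3:=10$, with exponents $1$, $n-1$, $-\ell-m$. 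The heights of $\gamma_2,\gamma_3$ are as before; the only new ingredient is $h(\gamma_1)$, which by the height properties and \eqref{eqh} is bounded by $h(9f_k(\alpha)(2\alpha-1)) + h(d_1 10^\ell + d_2-d_1) \le (\text{const} + 3\log k + 0.7/k) + \log(9\cdot 10^\ell + 9)$, i.e. essentially $\ell\log 10$ up to lower-order terms. Feeding this and Lemma \ref{lem:l}'s bound $\ell < 2.62\cdot 10^{12}k^4(\log k)^2\log n$ into the resulting inequality $m\log 10 - \log C' < (\text{Matveev constant})\cdot k^2(1+\log k)(1+\log n)\cdot A_1\cdot A_2\cdot A_3$ with $A_1 \approx k\cdot \ell\log 10$, the factor $\ell$ contributes an extra $k^4(\log k)^2\log n$, so $A_1$ is of order $k^5(\log k)^2\log n$; multiplying by $A_2\asymp 1$, $A_3\asymp k$, and the $k^2(1+\log k)(1+\log n)$ prefactor yields a bound of order $k^8(\log k)^3(\log n)^2$, matching the claimed $m < 1.73\cdot 10^{24}k^8(\log k)^3(\log n)^2$ after absorbing constants.

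The main obstacle — and the place where care is needed — is controlling $h(\gamma_1)$: unlike in Lemma \ref{lem:l}, the coefficient $d_1 10^\ell + d_2 - d_1$ now depends on $\ell$, so $A_1$ is no longer an absolute-times-$k$ quantity but grows with $\ell$, and it is precisely the substitution of the bound on $\ell$ from Lemma \ref{lem:l} that inflates the exponents from $(k^4,(\log k)^2,\log n)$ to $(k^8,(\log k)^3,(\log n)^2)$. One must be slightly careful that $d_1 10^\ell + d_2 - d_1$ is a positive integer (it is, since $d_1\ge 1$ and $\ell\ge 1$ force $d_1 10^\ell \ge 10 > |d_2 - d_1|$), so that $\gamma_1$ is a bona fide nonzero algebraic number of degree $k$ and its height is simply $h(9f_k(\alpha)(2\alpha-1)) + \log(d_1 10^\ell + d_2 - d_1)$. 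Everything else is a routine repetition of the estimates already carried out, and I would state the final numerical constant after collecting the powers of $30$, $3^{4.5}$, $\log 10$, and the slack from $\log C'$ and $\log 11$.
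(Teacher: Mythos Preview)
Your proposal is correct and follows essentially the same route as the paper: the paper too groups the $10^{2\ell+m}$ and $10^{\ell+m}$ terms, divides by $\tfrac19(d_1\cdot 10^\ell-(d_1-d_2))\cdot 10^{\ell+m}$ to obtain $|\Gamma_2|<12\cdot 10^{-m}$, rules out $\Gamma_2=0$ via the Galois-conjugate argument, applies Matveev with the same $\gamma_i$ and $b_i$, bounds $h(\gamma_1)$ by roughly $\ell\log 10$, and then substitutes the bound on $\ell$ from Lemma~\ref{lem:l} to get $A_1\approx 6.04\cdot 10^{12}k^5(\log k)^2\log n$. Your identification of the $\ell$-dependence of $h(\gamma_1)$ as the reason the exponents double is exactly right.
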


\begin{proof}
	Following the approach used in proving Lemma \ref{lem:l}, we rewrite \eqref{3.5} using \eqref{eq:pal} as
	\begin{align*}
		\left|\dfrac{1}{9}\left(d_1 \cdot 10^{2\ell+m} - (d_1-d_2) \cdot 10^{\ell+m} + (d_1-d_2) \cdot 10^{\ell} - d_1 \right) - f_k(\alpha)(2\alpha-1)\alpha^{n-1}\right| < \dfrac{3}{2}.
	\end{align*}
	For $\ell \ge 1$, this gives
	\begin{align*}
		\left|f_k(\alpha)(2\alpha-1)\alpha^{n-1} - \dfrac{1}{9}\left(d_1 \cdot 10^{2\ell+m} - (d_1-d_2) \cdot 10^{\ell+m}\right)\right|
		&< \dfrac{3}{2} + \dfrac{1}{9}(|d_1-d_2| \cdot 10^{\ell} + d_1) \\
		&\le \dfrac{3}{2} + 10^{\ell} + 1 < 1.3 \cdot 10^{\ell}.
	\end{align*}
	Dividing by $(d_1 \cdot 10^{2\ell+m} - (d_1-d_2) \cdot 10^{\ell+m})/9$ leads to
	\begin{align}\label{gam3}
		\left|\Gamma_2\right| := \left|\dfrac{9 f_k(\alpha)(2\alpha-1)}{(d_1 \cdot 10^{\ell} - (d_1-d_2))} \cdot 10^{-\ell-m}\alpha^{n-1} - 1\right|
		&< 12 \cdot 10^{-m}.
	\end{align}
	Again, $\Gamma_2 \ne 0$, as otherwise
	\[
	d_1 \cdot 10^{2\ell+m} - (d_1-d_2) \cdot 10^{\ell+m} = 9 f_k(\alpha)(2\alpha-1) \alpha^{n-1}.
	\]
	Applying a Galois automorphism mapping $\alpha$ to $\alpha_j$ for $2 \leq j \leq k$ and taking absolute values yields
	\[
	10^2 \le d_1 \cdot 10^{2\ell+m} - (d_1-d_2) \cdot 10^{\ell+m} = \left| 9 f_k(\alpha_j)(2\alpha_j-1) \alpha_j^{n-1} \right| < 9,
	\]
	a contradiction. The number field remains $\mathbb{K} := \mathbb{Q}(\alpha)$ with $D = k$ and $t := 3$,
	\begin{equation}\nonumber
		\begin{aligned}
			\gamma_{1} &:= 9f_k(\alpha)(2\alpha-1)/(d_1 \cdot 10^{\ell} - (d_1-d_2)), \quad \gamma_{2} := \alpha, \qquad \gamma_{3} := 10, \\
			b_{1} &:= 1, \qquad \qquad \qquad \qquad \qquad \qquad b_{2} := n-1, \qquad b_{3} := -\ell-m.
		\end{aligned}
	\end{equation}
	We estimate
	\begin{align*}
		h(\gamma_{1}) &= h(9f_k(\alpha)(2\alpha-1)/(d_1 \cdot 10^{\ell} - (d_1-d_2))) \\
		&\le h(9) + h(f_k(\alpha)) + h(2\alpha-1) + h(d_1 \cdot 10^{\ell} - (d_1-d_2)) \\
		&< \log 9 + 2\log k + \log 2 + \log(\alpha)/k + \log 2 + \log 9 + \ell \log 10 + \log 9 + \log 2 \\
		&< 3\log 9 + 3\log 2 + (2.62 \cdot 10^{12}k^4 (\log k)^2 \log n)\log 10 + 2\log k + (\log \alpha)/k \\
		&= k^4 (\log k)^2 \log n \left(\frac{3\log 9}{k^4(\log k)^2 \log n} + \frac{3\log 2}{k^4(\log k)^2 \log n} + \frac{\log \alpha}{k^5(\log k)^2 \log n} + 2.62 \cdot 10^{12}\log 10\right) \\
		&< 6.04 \cdot 10^{12}k^4 (\log k)^2 \log n,
	\end{align*}
	so we take $A_1 := 6.04 \cdot 10^{12}k^5 (\log k)^2 \log n$. As before, $A_2 := 0.7$, $A_3 := k \log 10$, and $B := n$. 
	
	By Theorem \ref{thm:Matl},
	\begin{align}\label{gam4}
		\log |\Gamma_2| &> -1.4 \cdot 30^{6} \cdot 3^{4.5} \cdot k^2 (1 + \log k)(1 + \log n) \cdot 6.04 \cdot 10^{12}k^5 (\log k)^2 \log n \cdot 0.7 \cdot k \log 10 \nonumber \\
		&> -3.95 \cdot 10^{24}k^8 (\log k)^3 (\log n)^2.
	\end{align}
	for $k \ge 3$ and $n \ge 8$. Comparing \eqref{gam3} and \eqref{gam4}, we obtain
	\begin{align*}
		m \log 10 - \log 12 &< 3.95 \cdot 10^{24}k^8 (\log k)^3 (\log n)^2,
	\end{align*}
	which implies $m < 1.73 \cdot 10^{24}k^8 (\log k)^3 (\log n)^2$.
\end{proof}

Finally, returning to \eqref{4.1} and using Lemmas \ref{lem:l} and \ref{lem:m}, we have
\begin{align*}
	n &< 5(2\ell + m) + 1 \\
	&< 5(2 \cdot 2.62 \cdot 10^{12}k^4 (\log k)^2 \log n + 1.73 \cdot 10^{24}k^8 (\log k)^3 (\log n)^2) + 1 \\
	&= k^8 (\log k)^3 (\log n)^2 \left(\frac{5 \cdot 2 \cdot 2.62 \cdot 10^{12}k^4 (\log k)^2 \log n}{k^8 (\log k)^3 (\log n)^2} + 5 \cdot 1.73 \cdot 10^{24} + \frac{1}{k^8 (\log k)^3 (\log n)^2} \right)  \\
	&< 8.66 \cdot 10^{24}k^8 (\log k)^3 (\log n)^2,
\end{align*}
yielding
\begin{align}\label{boundn}
	n &< 1.63 \cdot 10^{29}k^8 (\log k)^5.
\end{align}

We now consider two cases based on the value of $k$.

\newpage
\textbf{Case I:} When $k \leq 1500$, it follows from \eqref{boundn} that  
\[
n < 1.63 \cdot 10^{29}k^8 (\log k)^5.
\]
We proceed to reduce this upper bound for $n$. First, we recall \eqref{gam1} as,
\[
\Gamma_1 := \frac{9}{d_1} \cdot 10^{-2\ell - m} f_k(\alpha)(2\alpha-1) \alpha^{n-1} - 1 = e^{\Lambda_1} - 1.
\]
We already showed that $\Gamma_1 \neq 0$, thus $\Lambda_1 \neq 0$. If $\ell \geq 2$, we have $\left| e^{\Lambda_1} - 1 \right| = |\Gamma_1| < 0.5$, which yields $e^{|\Lambda_1|} \leq 1 + |\Gamma_1| < 1.5$. Thus, 
\[
\left| (n-1) \log \alpha - (2\ell + m) \log 10 + \log \left( \frac{9 f_k(\alpha)(2\alpha-1)}{d_1} \right) \right| < \frac{18}{10^\ell}.
\]
We proceed by applying the LLL algorithm for each value of \( k \in [3,1500] \) and for every \( d_1 \in \{1, \dots, 9\} \), in order to determine a lower bound for the smallest nonzero value of the linear form above, in which all our integer coefficients are bounded above in absolute value by  \( n < 8.8 \cdot 10^{58} \). For this purpose, we consider the lattice
 
\[
\mathcal{A} = \begin{pmatrix} 
	1 & 0 & 0 \\ 
	0 & 1 & 0 \\ 
	\lfloor C\log \alpha\rfloor & \lfloor C\log (1/10)\rfloor & \lfloor C\log \left(9 f_k(\alpha)(2\alpha-1)/d_1\right) \rfloor
\end{pmatrix},
\]
with $C := 2.1\cdot 10^{178}$ and $y := (0,0,0)$. Applying Lemma \ref{lem2.5m}, we obtain
\[
l(\mathcal{L},y) = |\Lambda| > c_1 = 10^{-60} \quad \text{and} \quad \delta = 1.81\cdot 10^{59}.
\]
Via Lemma \ref{lem2.6m}, we have that $S =1.53 \cdot 10^{118}$ and $T = 1.32 \cdot 10^{59}$. Since $\delta^2 \geq T^2 + S$, choosing $c_3 := 18$ and $c_4 := \log 10$, we get $\ell \leq 121$.

Next, we go back to \eqref{gam3}, that is,
$$\Gamma_2:=\dfrac{9 f_k(\alpha)(2\alpha-1)}{(d_1 \cdot 10^{\ell} - (d_1-d_2))} \cdot 10^{-\ell-m}\alpha^{n-1} - 1=e^{\Lambda_2}-1,$$
for which we have already established that \( \Gamma_2 \ne 0 \), and hence \( \Lambda_2 \ne 0 \). Let us now assume, temporarily, that \( m \ge 2 \). As in the previous arguments, this assumption implies that
\begin{align*}
	\left|(n-1) \log \alpha - (\ell + m) \log 10 + \log \left( \dfrac{9 f_k(\alpha)(2\alpha-1)}{d_1\cdot 10^{\ell}-(d_1-d_2)} \right)\right|<\dfrac{19}{10^m}.
\end{align*}
For every \( k \in [3, 1500] \), \( \ell \in [1,121] \), and for all \( d_1, d_2 \in \{0,\ldots,9\} \) such that \( d_1 > 0 \) and \( d_1 \ne d_2 \), we apply the LLL algorithm to estimate a lower bound for the smallest nonzero value of the linear form, where the integer coefficients are bounded in absolute value by \( n < 8.8 \cdot 10^{58} \). To this end, we employ the approximation lattice
$$ \mathcal{A}=\begin{pmatrix}
	1 & 0 & 0 \\
	0 & 1 & 0 \\
	\lfloor C\log \alpha\rfloor & \lfloor C\log (1/10)\rfloor& \lfloor C\log \left(9 f_k(\alpha)(2\alpha-1)/(d_1\cdot 10^{\ell}-(d_1-d_2))\right)\rfloor
\end{pmatrix} ,$$
Let \( C := 3.0 \cdot 10^{178} \) and \( y := (0, 0, 0) \). By applying Lemma~\ref{lem2.5m}, we obtain
\[
\ell(\mathcal{L}, y) = |\Lambda| > c_1 = 10^{-60}, \quad \text{with} \quad \delta = 2.0 \cdot 10^{59}.
\]
Nevertheless, Lemma~\ref{lem2.6m} yields the same values for \( S \) and \( T \) as before. Therefore, choosing \( c_3 := 19 \) and \( c_4 := \log 10 \), we conclude that \(m \leq 122.
\)

Consequently, by applying inequality~\eqref{4.1}, we obtain the upper bound \( n \leq 1821 \). To conclude this case, we carried out a computational search using Python, checking all values of \( L_n^{(k)} \) for \( k \in [3, 1500] \) and \( n \in [7, 1821] \), in order to identify those that are palindromic concatenations of two distinct repdigits. However, this exhaustive check returned no results, and thus no such values were found.

\textbf{Case II:} If $k > 1500$, then 
\begin{align}\label{boundn2}
	n	<1.63\cdot 10^{29}k^8 (\log k)^5<2^{k/2}.
\end{align}
We prove the following result.
\begin{lemma}\label{lem:nm}
	Let $(\ell, m, n, k)$ be a solution to the Diophantine equation \eqref{eq:main} with $n \ge 8$, $k>1500$ and $n\ge k+1$. Then
	$$k<1.8\cdot 10^{31} \qquad\text{and}\qquad n<3.5\cdot 10^{288}.$$
\end{lemma}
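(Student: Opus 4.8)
The plan is to exploit that $k>1500$ forces $n<2^{k/2}$ by \eqref{boundn2}, so that the sharper estimate \eqref{pk_b1} is available. Its point is that it replaces the transcendental quantity $f_k(\alpha)(2\alpha-1)\alpha^{n-1}$ by the rational number $3\cdot 2^{n-2}$ at the cost of a relative error below $36/2^{k/2}$. Feeding this, together with \eqref{3.6} and the palindromic expansion \eqref{eq:pal}, into the arguments of Lemmas \ref{lem:l} and \ref{lem:m} turns the relevant linear forms into linear forms in logarithms of \emph{rational} numbers. Theorem \ref{thm:Matl} then applies with $D=1$, and the resulting lower bounds no longer depend on $k$; it is exactly this loss of $k$-dependence that makes it possible to bound $k$ from above.

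Concretely, from \eqref{pk_b1}, \eqref{3.6} and \eqref{eq:pal} one first gets, after dividing through as in the proof of Lemma \ref{lem:l},
\[
|\Gamma_3| := \left|\frac{27}{d_1}\,2^{\,n-2}\,10^{-(2\ell+m)}-1\right| < \frac{c_1}{2^{k/2}}+\frac{c_2}{10^{\ell}},
\]
with explicit absolute constants $c_1,c_2$. One checks $\Gamma_3\neq 0$ by comparing the exponent of $5$ on the two sides of $27\cdot 2^{n-2}=d_1\cdot 10^{2\ell+m}$ (the left side is prime to $5$, the right side is divisible by $5^{2\ell+m}$). Applying Theorem \ref{thm:Matl} with $\mathbb{K}=\mathbb{Q}$ ($D=1$), $t=3$, $\gamma_1=27/d_1,\ \gamma_2=2,\ \gamma_3=10$ and $B=n$ gives $\log|\Gamma_3|>-C_1(1+\log n)$ for an absolute constant $C_1$. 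Comparing with the upper bound, either $2^{k/2}<2c_1e^{C_1(1+\log n)}$ — i.e.\ $k<\tfrac{2}{\log 2}\bigl(C_1(1+\log n)+O(1)\bigr)$ — or else $10^{\ell}<2c_2e^{C_1(1+\log n)}$, i.e.\ $\ell<\tfrac{1}{\log 10}\bigl(C_1(1+\log n)+O(1)\bigr)$.

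In the second branch $\ell$ is now bounded by an explicit multiple of $1+\log n$, and I would repeat the argument starting from $\Gamma_2$ in \eqref{gam3}: using \eqref{pk_b1} once more gives
\[
|\Gamma_2'| := \left|\frac{27\cdot 2^{\,n-2}}{\bigl(d_1\cdot 10^{\ell}-(d_1-d_2)\bigr)10^{\ell+m}}-1\right| < \frac{c_3}{2^{k/2}}+\frac{c_4}{10^{m}},
\]
again nonzero by a $5$-adic valuation argument. Here Matveev is applied with $D=1$ and the rational number $27/\bigl(d_1\cdot 10^{\ell}-(d_1-d_2)\bigr)$ as one of the $\gamma_i$; its height is $<(\ell+1)\log 10$, which by the bound on $\ell$ is itself a multiple of $1+\log n$. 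Hence $\log|\Gamma_2'|>-C_2(1+\log n)^2$ for an absolute $C_2$, and comparing with the upper bound gives either $k<\tfrac{2}{\log 2}\bigl(C_2(1+\log n)^2+O(1)\bigr)$ or $m<\tfrac{1}{\log 10}\bigl(C_2(1+\log n)^2+O(1)\bigr)$. Now I collect cases. In every case in which $k$ is bounded by a fixed multiple of $(1+\log n)$ or of $(1+\log n)^2$, I substitute $n<1.63\cdot 10^{29}k^8(\log k)^5$ from \eqref{boundn}, so that $1+\log n$ is at most a small explicit multiple of $\log k$ for $k>1500$; the resulting inequality of the shape $k<A(\log k)^2$ has only the solutions $k<1.8\cdot 10^{31}$. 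In the one remaining case $\ell$ and $m$ are both bounded by multiples of $(1+\log n)$ and $(1+\log n)^2$, and \eqref{4.1} gives $n<5(2\ell+m)+1<c(1+\log n)^2$, which forces $n$, hence $k\le n-1$, well below $1.8\cdot 10^{31}$. Either way $k<1.8\cdot 10^{31}$, and plugging this back into \eqref{boundn} yields $n<3.5\cdot 10^{288}$.

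The main obstacle is conceptual rather than computational: recognising that \eqref{pk_b1} is precisely what makes $D=1$ possible — without it the linear forms live in $\mathbb{Q}(\alpha)$, $D=k$, and the Matveev lower bound grows with $k$ at least as fast as the trivial upper bound, so no bound on $k$ results. After that, the work is careful bookkeeping of constants: tracking how the bound on $\ell$ inflates the height entering the second linear form (producing the decisive $(1+\log n)^2$), verifying that each $\Gamma$ is nonzero and that $B=n$ is a legitimate coefficient bound, and finally solving the transcendental inequality $k<A(\log k)^2$ carefully enough to land below $1.8\cdot 10^{31}$.
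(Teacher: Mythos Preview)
Your proposal is correct and follows essentially the same route as the paper: use \eqref{pk_b1} to replace $f_k(\alpha)(2\alpha-1)\alpha^{n-1}$ by $3\cdot 2^{n-2}$, form the rational linear form $\Gamma_3$, apply Matveev with $D=1$ to split into a bound on $k$ or on $\ell$, then in the $\ell$-bounded branch form a second rational linear form (the paper's $\Gamma_4$, your $\Gamma_2'$) whose Matveev bound involves $(\log n)^2$, and finally feed $\log n\ll \log k$ from \eqref{boundn} back in to solve $k<A(\log k)^2$. The only cosmetic difference is that the paper, in the $\ell$-branch, divides by $3\cdot 2^{n-2}$ and uses $10^{\ell}\le 2^{k/2}$ together with $n-2\ge k-1$ to absorb the residual $10^{\ell}/2^{n-2}$ term into $c/2^{k/2}$, so its $\Gamma_4$ has a single error term and yields a bound on $k$ directly, whereas you keep two error terms and handle the extra $m$-bounded subcase via \eqref{4.1}; both are valid.
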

\begin{proof}
Since $k>1500$, then \eqref{boundn2} holds and we can use the sharper estimate in \eqref{pk_b1} together with \eqref{eq:pal} and write
\begin{align*}
	\left| 
	\dfrac{1}{9} \left( 
	d_1 \cdot 10^{2\ell + m} 
	- (d_1 - d_2) \cdot 10^{\ell + m} 
	+ (d_1 - d_2) \cdot 10^{\ell} 
	- d_1 
	\right) 
	- 3 \cdot 2^{n - 2} 
	\right|
	&< 3 \cdot 2^{n - 2} \cdot \frac{36}{2^{k/2}}.\end{align*}
Thus
\begin{align*}
	\left| \dfrac{1}{9} \cdot d_1 \cdot 10^{2\ell + m} - 3 \cdot 2^{n - 2} \right|
	&< 3 \cdot 2^{n - 2} \cdot \frac{36}{2^{k/2}} 
	+ \frac{1}{9} \left( 
	|d_1 - d_2| \cdot 10^{\ell + m} 
	+ |d_1 - d_2| \cdot 10^{\ell} 
	+ d_1 
	\right)\\&< 3 \cdot 2^{n - 2} \cdot \frac{36}{2^{k/2}} +1.3\cdot10^{\ell+m},
\end{align*}
Dividing both sides by $2^{n - 2}$ gives
\begin{align*}
	\left|\dfrac{1}{27}\cdot d_1\cdot 10^{2\ell+m}\cdot 2^{-(n-2)}-1\right|<\frac{36}{2^{k/2}} +1.3\cdot\frac{10^{\ell+m}}{2^{n-2}}.
\end{align*}
Using the result in \eqref{4.1}, we get
\begin{align*}
	\left|\dfrac{1}{27}\cdot d_1\cdot 10^{2\ell+m}\cdot 2^{-(n-2)}-1\right|&<\frac{36}{2^{k/2}} +1.3\cdot\frac{10^{\ell+m}}{2^{n-2}}=\frac{36}{2^{k/2}} +1.3\cdot\frac{10^{\ell+m}}{2^{n-1}\cdot 2^3}\\&<\frac{36}{2^{k/2}} +1.3\cdot\frac{10^{\ell+m}}{8\cdot10^{2\ell+m-1}}=\frac{36}{2^{k/2}} +1.3\cdot\frac{10}{8\cdot2^{l}}\\&<\frac{36}{2^{k/2}} +\frac{2}{10^{l}}<\frac{39}{2^{\min\left\{k/2,\, \ell \log_2 10\right\}}}.
\end{align*}
Therefore,
\begin{align}\label{4.9}
	\left|\dfrac{1}{27}\cdot d_1\cdot 10^{2\ell+m}\cdot 2^{-(n-2)}-1\right|&<\frac{39}{2^{\min\left\{k/2,\, \ell \log_2 10\right\}}}.
\end{align}
Fix
\begin{align*}
	\Gamma_3:=\dfrac{1}{27}\cdot d_1\cdot 10^{2\ell+m}\cdot 2^{-(n-2)}-1.
\end{align*}
Suppose $\Gamma_3= 0$, then
\[d_1\cdot 10^{2\ell+m}=27\cdot 2^{(n-2)}.
\]
This is a contradiction since the left hand side is divisible by 5, however, the right hand side is not. Clearly, $\Gamma_3\neq0$. We take the algebraic number field $\mathbb{K}=\mathbb{Q}$ with $D=1, t:=3$,
\begin{equation}\nonumber
	\begin{aligned}
		&\gamma_{1}:=d_1/27,\quad~&\gamma_{2}:=10,\qquad\qquad&\gamma_{3}:=2,\\
		&b_{1}:=1,\qquad\qquad &b_{2}:=2\ell+m,\qquad &b_{3}:=-(n-2).
	\end{aligned}
\end{equation}
Since $h(\gamma_{1})\leq h(d_1)+h(27)\leq\log(243), h(\gamma_{2})=h(10)=\log10 $ and $h(\gamma_{3})=h(2)=\log2$, we get $A_1:=\log(243), A_2:=\log(10), A_3:=\log2.$ 
Also, $B :=n>\max\{|b_i|:i=1,2,3\}$. Applying Theorem \ref{thm:Matl}, we get
\begin{align}\label{gam6}
	\log |\Gamma_3| &> -1.4\cdot 30^{6} \cdot 3^{4.5}\cdot 1^2 (1+\log 1)(1+\log n)\cdot \log 243\cdot \log 2 \cdot\log 10 \nonumber\\
	&> -1.9\cdot 10^{12}\log n.
\end{align}
for all $n\ge 8$. Comparing \eqref{4.9} and \eqref{gam6}, we get
\begin{align*}
	\min\{k/2,\, \ell\log_2 10\}\log 2-\log 39 &<1.9\cdot 10^{12}\log n.
\end{align*}
This yields two cases 	.
\begin{enumerate}[(a)]
	\item If $\min\{k/2,\, \ell\log_2 10\}:=k/2$, then $(k/2)\log 2-\log 39 <1.9\cdot 10^{12}\log n$. Hence,
	\begin{align*}
		k &<5.5\cdot 10^{12}\log n.
	\end{align*}
By \eqref{boundn2}, we have \[n<1.63\cdot 10^{29}k^8(\log k)^5,\] and so \[\log n < 68+13\log k <23\log k,\] for $k>1500$. Hence we get \[k<8.3\cdot 10^{15}.\]
	
	\item If $\min\{k/2,\, \ell\log_2 10\}:=\ell\log_2 10$, then $(\ell\log_2 10)\log 2-\log 39 <1.9\cdot 10^{12}\log n$, which yields
	\begin{align*}
		\ell &<8.3\cdot 10^{11}\log n.
	\end{align*}
We proceed as in \eqref{4.9} via
\begin{align*}
	\left| 
	\dfrac{1}{9} \left( 
	d_1 \cdot 10^{2\ell + m} 
	- (d_1 - d_2) \cdot 10^{\ell + m} 
	+ (d_1 - d_2) \cdot 10^{\ell} 
	- d_1 
	\right) 
	- 3 \cdot 2^{n - 2} 
	\right|
	&< 3 \cdot 2^{n - 2} \cdot \frac{36}{2^{k/2}}.\end{align*}
Therefore
\begin{align*}
	\left| 
	\dfrac{1}{9} \left( 
	d_1 \cdot 10^{2\ell + m} 
	- (d_1 - d_2) \cdot 10^{\ell + m} 
	\right) - 3 \cdot 2^{n - 2} 
	\right|
	&< 3 \cdot 2^{n - 2} \cdot \frac{36}{2^{k/2}} + \frac{1}{9}(|d_1 - d_2|\cdot 10^{\ell}+ d_1)\\&<3 \cdot 2^{n - 2} \cdot \frac{36}{2^{k/2}} +1.2\cdot 10^l.
	\end{align*}
Dividing both sides by $3 \cdot 2^{n - 2}$, we get that
\begin{align*}
	\left|\dfrac{1}{27}(d_1\cdot 10^{2\ell+m}-(d_1-d_2)\cdot10^{\ell+m})\cdot 2^{-(n-2)}-1\right| &
	< \frac{36}{2^{k/2}} +\frac{10^l}{2^{n-2}}.
\end{align*}
Notice that since $n\geq k+1, n-2>k/2 $ and $ k/2 > l\log 10,  $ then we get 
\begin{align}\label{gam7}
	\left|\dfrac{1}{27}(d_1\cdot 10^{2\ell+m}-(d_1-d_2)\cdot10^{\ell+m})\cdot 2^{-(n-2)}-1\right| &
	< \frac{7}{2^{k/2}}.
\end{align}
Fix \[\Gamma_4:=\dfrac{1}{27}(d_1\cdot 10^{\ell}-(d_1-d_2))\cdot10^{\ell+m}\cdot 2^{-(n-2)}-1.\]
Clearly $\Gamma_4\ne 0$, otherwise we would have
\[d_1\cdot 10^{2\ell+m}-(d_1-d_2)\cdot10^{\ell+m}=27\cdot 2^{(n-2)}
,\]
which is a contradiction because the left hand side is divisible by 5 while the right hand side is not. Fix $\mathbb{K} := \mathbb{Q}.$ We have $D = 1$, $t :=3$,
\begin{equation}\nonumber
	\begin{aligned}
		&\gamma_{1}:=(d_1\cdot 10^{\ell}-(d_1-d_2))/27,\quad~&\gamma_{2}:=10,\qquad\qquad&\gamma_{3}:=2,\\
		&b_{1}:=1,\qquad \qquad\qquad\qquad\qquad\qquad\qquad~ &b_{2}:=\ell+m,\qquad~~ &b_{3}:=-(n-2).
	\end{aligned}
\end{equation}
\begin{align*}
	h(\gamma_{1})&=h((d_1\cdot 10^{\ell}-(d_1-d_2))/27)\\&<\log 9 +\ell \log 10 +2\log 9 +\log 27 +4\log 2\\&< \log 9 +(8.3\cdot 10^{11}\log n) \log 10 +2\log 9 +\log 27 +4\log 2\\&<1.9\cdot 10^{12} \log n.
\end{align*}Taking $A_1:=1.9\cdot 10^{12} \log n, A_2:=\log 2, A_3:=\log 10$ and $B:=n$. 
By Theorem \ref{thm:Matl}, we have
\begin{align}\label{gam8}
	\log |\Gamma_4| &> -1.4\cdot 30^{6} \cdot 3^{4.5}\cdot 1^2 (1+\log 1)(1+\log n)\cdot 1.9\cdot 10^{12}\log n\cdot \log 2 \cdot \log 10\nonumber\\
	&> -6.5\cdot 10^{23}(\log n)^2,
\end{align}
for all $k\geq3$ and $n\ge 7$. Comparing \eqref{gam7} and \eqref{gam8}, we get
\begin{align*}
	(k/2)\log 2-\log 7 &<6.5\cdot 10^{23}(\log n)^2,
\end{align*}
hence \[k<1.9\cdot 10^{24}(\log n)^2.\]
From \eqref{boundn2}, $\log n <24\log k$ for $k>1500$. This gives, \[k<1.8\cdot 10^{31},\] and \[n<3.5\cdot 10^{288}.\]	
\end{enumerate}
This completes the proof.
\end{proof}

We now proceed to reduce the bounds obtained in Lemma \ref{lem:nm}. To do this, we revisit \eqref{4.9} and recall that
\begin{align*}
	\Gamma_3:=\dfrac{1}{27}\cdot d_1\cdot 10^{2\ell+m}\cdot 2^{-(n-2)}-1
\end{align*}
We already showed that $\Gamma_3 \neq 0$, therefore $\Lambda_3 \neq 0$. Also since $\min \{k/2, \ell\log_2 10\}>6, $
\[
\left| e^{\Lambda_3} - 1 \right| = |\Gamma_3| < 0.5,
\]
which leads to $e^{|\Lambda_3|} \leq 1 + |\Gamma_3| < 1.5$. Therefore 
\[
\left|  \log \left(\dfrac{d_1}{27}\right) + (2\ell + m) \log 10 -(n-1) \log 2  \right| < \dfrac{59}{2^{\min\{k/2,\, \ell\log_2 10\}}}.
\]
For each $d_1\in\{1,\ldots , 9\}$, we apply the LLL-algorithm to get a lower bound for the smallest nonzero value of the above linear form, bounded by integer coefficients with absolute values less than $n<3.5\cdot 10^{288}$. Here, we consider the lattice  
\[
\mathcal{A}_1 = \begin{pmatrix} 
	1 & 0 & 0 \\ 
	0 & 1 & 0 \\ 
	\lfloor C\log (d_1/27)\rfloor & \lfloor C\log 10\rfloor & \lfloor C\log (1/2) \rfloor
\end{pmatrix},
\]
where we set $C := 1.3\cdot 10^{867}$ and $y := (0,0,0)$. Applying Lemma \ref{lem2.5m}, we obtain  
\[
l(\mathcal{L},y) = |\Lambda| > c_1 = 10^{-291} \quad \text{and} \quad \delta = 5.6\cdot 10^{290}.
\]
Using Lemma \ref{lem2.6m}, we conclude that $S = 2.5 \cdot 10^{578}$ and $T = 5.3 \cdot 10^{289}$. Since $\delta^2 \geq T^2 + S$, then choosing $c_3 := 59$ and $c_4 := \log 2$, we get  $\min\{k/2,\, \ell\log_2 10\} \leq 1921$. 

We proceed the analysis in two ways.
\begin{enumerate}[(a)]
	\item If $\min\{k/2,\, \ell\log_2 10\}:=k/2$, then $k/2 \leq 1921$, or $k\le 3842$.

	\item If $\min\{k/2,\, \ell\log_2 10\}:=\ell\log_2 10$, then $\ell\log_2 10\le 1921$. Therefore $\ell \le 579$.
	We revisit equation \eqref{gam7} and recall the expression
	\[\Gamma_4:=\dfrac{1}{27}(d_1\cdot 10^{\ell}-(d_1-d_2))\cdot10^{\ell+m}\cdot 2^{-(n-2)}-1.\]
	Since we showed that \(\Gamma_4 \neq 0\), it follows that \(\Lambda_4 \neq 0\). Also since $k>1500,$ we get the inequality \[|e^{\Lambda_4}-1|=|\Gamma_4|<0.5,\] which implies that $e^{|\Lambda_4|}\leq1+|\Gamma_4|<1.5$ Hence we arrive at
	\[
	\left|  \log \left(\frac{d_1\cdot 10^{\ell}-(d_1-d_2)}{27}\right) + (\ell + m) \log 10 -(n-2) \log 2  \right| < \frac{11}{2^{k/2}}.
	\]
We apply LLL-algorithm with a restriction on integer coefficients such that their absolute values are below $3.5\cdot 10^{288}.$ For this we consider the lattice
	\[
	\mathcal{A}_2 = \begin{pmatrix} 
		1 & 0 & 0 \\ 
		0 & 1 & 0 \\ 
		\lfloor C\log \left(\left(d_1\cdot 10^{\ell}-(d_1-d_2)\right)/27\right)\rfloor & \lfloor C\log 10\rfloor & \lfloor C\log (1/2) \rfloor
	\end{pmatrix},
	\]
	with \(C := 1.3^{867}\) and set \(y := (0,0,0)\) as before. Taking the same values for $\delta$, $S$ and $T$ as before, we choose \(c_3 := 11\) and \(c_4 := \log 2\). This yields $k/2 \leq 1919$ which gives $k\le 3838$.
	
\end{enumerate}
Therefore, we always have $k\le 3838$. 

Applying the bound obtained on $k$ to inequality \eqref{boundn2}, we get $n < 3\cdot 10^{62}$. We therefore proceed with a second round of reduction using this updated bound on $n$. Going back to equation \eqref{4.9}, as previously done, we use the LLL-algorithm to obtain a lower bound for the smallest nonzero value of the corresponding linear form, with the condition that the involved integer coefficients are bounded above by $n < 3.0\cdot 10^{62}$. We use the same lattice  $\mathcal{A}_1$ as before but now take $C := 9.0\cdot 10^{188}$ and $y := (0,0,0)$. By applying Lemma \ref{lem2.5m}, we get  
\[
l(\mathcal{L},y) = |\Lambda| > c_1 = 10^{-65} \quad \text{and} \quad \delta = 6.0\cdot 10^{64}.
\]
Using Lemma \ref{lem2.6m}, we conclude that $S = 1.9 \cdot 10^{126}$ and $T = 5.0 \cdot 10^{63}$. Since $\delta^2 \geq T^2 + S$, and choosing $c_3 := 59$ and $c_4 := \log 2$, it follows that $\min\{k/2,\, \ell\log_2 10\} \leq 419$. 

We again analyze two cases:
\begin{enumerate}[(a)]
	\item If $\min\{k/2,\, \ell\log_2 10\} := k/2$, then $k/2 \leq 419$, which implies $k \leq 838$. This contradicts  our assumption that $k>1500$.
	
	\item If $\min\{k/2,\, \ell\log_2 10\} := \ell\log_2 10$, then $\ell\log_2 10 \leq 419$, giving $\ell \leq 127$.
	
	Revisiting equation \eqref{gam7}, we again apply the LLL-algorithm, with integer coefficients restricted to absolute values below $n <  3.0\cdot10^{62}$. By the same lattice $\mathcal{A}_2$ as before, we have $C := 9.0\cdot10^{188}$ and $y := (0,0,0)$. Using Lemma \ref{lem2.5m} and Lemma \ref{lem2.6m}, we have $\delta=6.0\cdot 10^{64}$, $S = 1.9 \cdot 10^{126}$ and $T = 5.0\cdot 10^{63}$. Selecting $c_3 := 11$ and $c_4 := \log 2$, we deduce that $k/2 \leq 417$, leading to $k \leq 834$. This again contradicts  our original assumption that $k>1500$.
	
\end{enumerate}
Therefore, the proof is complete. \qed

\section*{Acknowledgments} 
The first author was funded by the cost centre 0730 of the Mathematics Division at Stellenbosch University.

\section*{Addresses}
$ ^{1} $ Mathematics Division, Stellenbosch University, Stellenbosch, South Africa.

Email: \url{hbatte91@gmail.com}\\
$ ^{2} $ Department of Mathematics, Makerere University, Kampala, Uganda.

Email: \url{kaggwaprosper58@gmail.com}

\newpage
\appendix
\section{Appendices}

\subsection{Py Code I}\label{app1}
\begin{Verbatim}
def k_lucas_numbers(k, n_max=2530):
	if k < 2:
		raise ValueError("k must be at least 2")
	L = [0] * (k - 2) + [2, 1]
	for n in range(len(L), n_max):
		L.append(sum(L[-k:]))
	return L
	
def is_palindromic_concatenation(num):
	s = str(num)
	n = len(s)
	for l in range(1, n // 2 + 1):
		m = n - 2 * l
		if m <= 0:
			continue
		part1, part2, part3 = s[:l], s[l:l + m], s[l + m:]
		if part1 == part3 and len(set(part1)) == 1 
		and len(set(part2)) == 1 and part1 != part2:
			return True
	return False
	
def find_all_palindromic_k_lucas(k_min=3, k_max=1500, n_max=1500):
	results = []
	for k in range(k_min, k_max + 1):
		lucas_seq = k_lucas_numbers(k, n_max + k - 3)
		for i, val in enumerate(lucas_seq):
			math_n = i + 3 - k
			if math_n < 8 or math_n > 1500:
				continue
			if val > 0 and is_palindromic_concatenation(val):
				results.append((k, math_n, val))
	return results
	
# Run locally
results = find_all_palindromic_k_lucas()
for k, n, v in results:
	print(f"L_{n}^({k}) = {v}")
\end{Verbatim}

\end{document}